\newtheorem{definition}{Def\text{}inition}[section]
\newtheorem{theorem}[definition]{Theorem}
\newtheorem{lemma}[definition]{Lemma}
\newtheorem{proposition}[definition]{Proposition}
\newtheorem{corollary}[definition]{Corollary}
\newtheorem{question}[definition]{Question}
\begin{document}
	
	\title[Cellular-Lindel\"of spaces]{Consistency and independence phenomena involving cellular-Lindel\"of spaces}
	\author{Rodrigo Hern\'andez-Guti\'errez}\address{Departamento de Matem\'aticas, Universidad Aut\'onoma Metropolitana Campus Iztapalapa, Av. San Rafael Atlixco 186, Leyes de Reforma 1a Secci\'on, Iztapalapa, 09310, Mexico City, Mexico}
	\email{rod@xanum.uam.mx}
	
	\author{Santi Spadaro}\address{Dipartimento di Ingegneria,
		Universit\` a di Palermo, Viale delle Scienze, Ed. 8, 90128, Palermo, Italy}
	\email{santidomenico.spadaro@unipa.it, santidspadaro@gmail.com}

	\keywords{cellular-Lindelöf, tower, scale, concentrated set, weak Lindel\"of number}
	\subjclass[2020]{Primary: 54A25, 54D20, 03E17; Secondary: 54D15, 54G10, 03E35}
	
	\begin{abstract}
The cellular-Lindel\"of property is a common generalization of the Lindel\"of property and the countable chain condition that was introduced by Bella and Spadaro in 2018. We solve two questions of Alas, Guti\'errez-Dom\'inguez and Wilson by constructing consistent examples of a normal almost cellular-Lindelöf space which is neither cellular-Lindelöf nor weakly Lindel\"of and a Tychonoff cellular-Lindel\"of space of Lindel\"of degree $\omega_1$ and uncountable weak Lindel\"of degree for closed sets. We also construct a ZFC example of a space for which both the almost cellular-Lindel\"of property and normality are undetermined in ZFC. 
     \end{abstract}
	
	\maketitle
	
	\section{Introduction}
	
	The Lindel\"of property and the countable chain condition are two of the most important \emph{smallness} properties in topology. They are a prominent example of how the topological structure of a space can impact its cardinality, as shown by Hajnal and Juh\'asz's inequality stating that every first-countable Hausdorff space with the countable chain condition has cardinality at most continuum and Arhangel'skii's Theorem stating that every Lindel\"of first-countable Hausdorff space has cardinality bounded by the continuum. That is why, at least since Bell, Ginsburg and Woods's 1979 paper \cite{BGW}, there has been considerable interest in studying common generalizations of those two properties (see also \cite{BS3}). A first attempt at that was done by introducing the weak Lindel\"of property. Recall that the  \emph{weak Lindel\"of degree} of a space $X$, denoted by $wL(X)$, is the minimum cardinal $\kappa$ such that for every open cover $\mathcal{U}$ of $X$ there is a $\leq \kappa$ sized subcollection $\mathcal{V}$ of $\mathcal{U}$ such that $X \subset \overline{\bigcup \mathcal{V}}$. A space having countable weak Lindel\"of degree is said to be \emph{weakly Lindel\"of}. Both Lindel\"of spaces and spaces with the countable chain condition are weakly Lindel\"of. However, unlike the Lindel\"of property, the weak Lindel\"of property is not inherited by closed sets. This justifies the introduction of the following cardinal invariant.
	
	\begin{definition}
	\cite{A} Given a space $X$, the weak Lindel\"of degree for closed sets, which we denote by $wL_c(X)$, is the minimum cardinal $\kappa$ such that, for every closed subset $F$ of $X$ and for every open cover $\mathcal{U}$ of $F$, there is a $\leq \kappa$ sized subcollection $\mathcal{V}$ of $\mathcal{U}$ such that $F \subset \overline{\bigcup \mathcal{V}}$.
	\end{definition}
	
	Obviously $wL(X) \leq wL_c(X)$ and it is easy to prove that $wL_c(X)=wL(X)$ for every normal space $X$.
	
	Bell Ginsburg and Woods \cite{BGW} proved that $|X| \leq 2^{wL(X) \cdot \chi(X)}$ for every normal space $X$ and Alas \cite{A} proved that $|X| \leq 2^{wL_c(X) \cdot \chi(X)}$ for every Urysohn space $X$. However, the following questions are still open:
	
	\begin{question}
	(Bell, Ginsburg and Woods, \cite{BGW}) Let $X$ be a regular space. Is $|X| \leq 2^{wL(X) \cdot \chi(X)}$?
	\end{question}
	
	\begin{question}
	(Arhangel'skii, \cite{Ar}) Let $X$ be a Hausdorff space. Is $|X| \leq 2^{wL_c(X) \cdot \chi(X)}$?
	\end{question}

	A common refinement of the Lindel\"of property and the countable chain condition that has recently received a lot of attention is the \emph{cellular-Lindel\"of property}. Recall that a \emph{cellular family} in a topological space $X$ is simply a family of pairwise disjoint non-empty open subsets of $X$. A space $X$ is said to be (almost) cellular-Lindel\"of if for every cellular family $\mathcal{U}$ in $X$ there is a Lindel\"of subspace $L$ of $X$ such that $U \cap L \neq \emptyset$ for every $U \in \mathcal{U}$ (respectively, if $|\{U \in \mathcal{U}: U \cap L \neq \emptyset \}|=|\mathcal{U}|$).
	
	Cellular-Lindel\"of spaces were introduced by Bella and Spadaro in \cite{BS1}, where the question of whether a cellular-Lindel\"of first-countable space has cardinality at most continuum was first posed. Several researchers have attacked this question offering partial answers to it (see \cite{B}, \cite{BS2}, \cite{JSS}, \cite{S}, \cite{TW}, \cite{XS1} and \cite{XS2} for example). Besides, the same authors have embarked in a systematic study of the cellular-Lindel\"of and related properties, showing, among other things, that it has an interesting behavior with the product operation (see \cite{AJPW} and \cite{DS}). In particular, Dow and Stephenson \cite{DS} constructed an example of a cellular-Lindel\"of space whose product with the one point-compactification of a discrete space is not cellular-Lindel\"of.
	
	Almost cellular-Lindel\"of spaces were introduced by Alas, Guti\'errez-Dom\'inguez and Wilson in \cite{AGDW}. They are an interesting subclass of both the class of feebly Lindel\"of spaces and that of cellular-Lindel\"of spaces. The authors of \cite{AGDW} showed that the Mr\'owka-Isbell $\Psi$-space over a MAD family on $\omega_1$ is an example of an almost cellular-Lindel\"of space which is neither cellular-Lindel\"of nor weakly Lindel\"of, but left open the existence of a normal example with the same features. Assuming the existence of a tower of length $\omega_2$ of uncountable subsets of $\omega_1$, we will construct such an example, thus giving a consistent answer to their question.
	
     On the positive side, the authors of \cite{AGDW} proved that almost-cellular-Lindelöf space of Lindel\"of number at most $\omega_1$ are weakly Lindel\"of. That begs the question whether such spaces have also countable weak Lindel\"of number for closed sets. By exploiting scales, we will construct a consistent example of an even cellular-Lindel\"of space of Lindel\"of number $\omega_1$ whose weak Lindel\"of number for closed sets is uncountable.

	We also construct a ZFC example of a space for which both normality and the almost cellular-Lindelöf property are undecided in ZFC. That will be a byproduct of a characterization of when removing a point from a Lindel\"of $P$-space results in a cellular-Lindel\"of space. Even just for normality this appears to be new. 
	
	All spaces are assumed to be Hausdorff. Our notation regarding cardinal functions follows \cite{J}. In particular, given a topological space $X$, $\chi(X)$, $\pi \chi(X)$, $L(X)$ and $c(X)$ denote the character, $\pi$-character, Lindel\"of degree and cellularity of $X$ respectively.

		\section{The Main Results}

	Given a space $X$ and subsets $S, Q \subset X$, the set $S$ is said to be \emph{concentrated on $Q$} if for every open set $U$ with $Q \subset U$ the set $
	S\setminus U$ is countable. It was proved by Rothberger in 1939 that under $\mathfrak{b}=\omega_1$ it is possible to construct an uncountable set of irrational numbers that is concentrated on the rationals. The following discussion can be found in \cite{Ts} in more detail and generality.
	
	Let $\mathcal{P}(\omega)$ be the power set of $\omega$. The Cantor set topology on $\mathcal{P} (\omega)$ is the topology whose base is the collection $\mathcal{B}$ of all sets of the form
	$$[x; n]=\{y \in \mathcal{P}(\omega): y \cap n= x \cap n \}$$
	for $x \in \mathcal{P}(\omega)$ and $n < \omega$.
	
	We will identify each infinite subset of $\omega$ with its natural enumerating function, that is, given $x \in [\omega]^\omega$, define $x \in \omega^\omega$ as follows: $x(0)=\min{x}$ and $x(n+1)=\min{(x \setminus x(n))}$, for every $n<\omega$. Given $x, y \in [\omega]^\omega$ we write $x \leq^* y$ if the set $\{n<\omega: y(n) < x(n)\}$ is finite. The cardinal $\mathfrak{b}$ is the minimal size of a $\leq^*$-unbounded family in $[\omega]^\omega$.
	
	A set $\{x_\alpha: \alpha < \mathfrak{b} \} \subset [\omega]^\omega$ is called a $\mathfrak{b}$-scale if it is $\leq^*$-unbounded and $x_\alpha \leq^* x_\beta$, whenever $\alpha < \beta < \mathfrak{b}$. It is well-known that there are $\mathfrak{b}$-scales in ZFC and if $\mathfrak{b}=\omega_1$ then any $\mathfrak{b}$-scale is concentrated on $[\omega]^{<\omega}$.
	
	The following theorem answers Question 2.4 in \cite{AGDW}.
	
	\begin{theorem}
	$\mathfrak{b}=\omega_1$ implies there is a cellular-Lindel\"of Tychonoff space $X$ such that $L(X) \leq \omega_1$ but $wL_c(X)=\omega_1$.
	\end{theorem}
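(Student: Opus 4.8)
The plan is to attach to the $\mathfrak b$-scale a Mr\'owka--Isbell type space $X$ of cardinality $\omega_1$. Having $|X|=\omega_1$ makes both $L(X)\le\omega_1$ and $wL_c(X)\le\omega_1$ automatic, and Mr\'owka-type spaces are Tychonoff (in fact zero-dimensional, first countable and locally compact), so the real content is to arrange, inside $X$, a closed set witnessing $wL_c(X)\ge\omega_1$ while keeping $X$ cellular-Lindel\"of; the concentration of the scale on $[\omega]^{<\omega}$ is the tool for the latter. Concretely, work in $\mathcal P(\omega)$ with the Cantor topology, write $S=\{x_\alpha:\alpha<\omega_1\}$ and $D=[\omega]^{<\omega}$, and for each $\alpha$ fix a sequence $\sigma_\alpha\subseteq D$ converging to $x_\alpha$ (e.g.\ the finite initial segments of $x_\alpha$); since distinct members of $S$ eventually diverge in $\mathcal P(\omega)$, the family $\{\sigma_\alpha:\alpha<\omega_1\}$ is almost disjoint on the countable set $D$. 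The base space is $D\cup S$ with the points of $D$ isolated and the sets $\{x_\alpha\}\cup(\sigma_\alpha\setminus n)$, $n<\omega$, as basic neighbourhoods of $x_\alpha$; then $S$ is a closed discrete subspace of $X$. (A plain Mr\'owka space over $\omega$ is separable, hence has $wL_c=\omega$, and a Mr\'owka space over $\omega_1$ fails to be cellular-Lindel\"of, so the choice of the almost disjoint family --- and possibly of a small auxiliary Lindel\"of piece --- has to be made with both constraints in mind; this is where the scale is used, not merely an arbitrary almost disjoint family.)

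For $wL_c(X)\ge\omega_1$ I would take $F=S$, a closed discrete set, together with the open cover $\mathcal U=\{\{x_\alpha\}\cup\sigma_\alpha:\alpha<\omega_1\}$. Given a countable $\mathcal C\subseteq\omega_1$, almost disjointness together with $\sigma_\beta\to x_\beta$ shows that a point $x_\beta$ lies in $\overline{\bigcup_{\alpha\in\mathcal C}(\{x_\alpha\}\cup\sigma_\alpha)}$ precisely when $\sigma_\beta$ meets $\bigcup_{\alpha\in\mathcal C}\sigma_\alpha$ in an infinite set; using that the countably many branches $\sigma_\alpha$, $\alpha\in\mathcal C$, are localized in $\mathcal P(\omega)$ --- the step that genuinely uses the scale structure, and which fails for a general almost disjoint family --- one produces a $\beta$ with $x_\beta$ outside that closure. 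Hence no countable subfamily of $\mathcal U$ has union dense in $F$, so $wL_c(X)\ge\omega_1$; combined with $|X|=\omega_1$ this gives $wL_c(X)=\omega_1$, and in particular $X$ is not Lindel\"of, so $L(X)=\omega_1$.

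The main obstacle is cellular-Lindel\"ofness, which does not come for free since $X$ is not ccc ($\mathcal U$ contains an uncountable cellular family). The plan here is: given a cellular family $\mathcal V$, produce a single Lindel\"of subspace $L\subseteq X$ meeting all but countably many members of $\mathcal V$, then cover the remaining countably many members by a further countable (hence Lindel\"of) set and take the union. To get $L$, observe that each $V\in\mathcal V$ picks out, near $S$, a Cantor-open trace in $\mathcal P(\omega)$, and that concentration says that for any Cantor-open $W\supseteq D$ only countably many $x_\alpha$ lie outside $W$; running this through the pairwise disjointness of $\mathcal V$ should confine all but countably many members of $\mathcal V$ to the Lindel\"of closure of a countable set. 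The delicate point --- and the reason the neighbourhood assignment for the $x_\alpha$ (and the possible auxiliary Lindel\"of piece) must be chosen with care --- is that this absorption has to work for every cellular family at once while the cover $\mathcal U$ stays spread out enough to keep $wL_c$ large; reconciling these two opposite demands is exactly what the $\mathfrak b$-scale, through its concentration property, is there to do.
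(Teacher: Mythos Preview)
Your construction cannot work, and the reason is one you yourself state in the parenthetical: any Mr\'owka-type space over a \emph{countable} set is separable, and separability forces $wL_c(X)=\omega$. Indeed, $wL_c(X)\le d(X)$ in general: if $D$ is countable and dense, $F$ is closed and $\mathcal U$ is an open cover of $F$, then for each $d\in D\cap\bigcup\mathcal U$ choose $U_d\in\mathcal U$ with $d\in U_d$; for every $x\in F$ and every open $V\ni x$ the set $V\cap\bigcup\mathcal U$ is open and nonempty, hence meets $D$, hence meets some $U_d$, so $F\subseteq\overline{\bigcup_d U_d}$. In your space $X=D\cup S$ the countable set $D=[\omega]^{<\omega}$ is dense (every basic neighbourhood of $x_\alpha$ contains points of $\sigma_\alpha\subseteq D$), so $wL_c(X)=\omega$. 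Concretely, your argument for $wL_c\ge\omega_1$ breaks at the very first step: since $D$ is countable, one can choose a countable $\mathcal C\subseteq\omega_1$ with $\bigcup_{\alpha\in\mathcal C}\sigma_\alpha=\bigcup_{\alpha<\omega_1}\sigma_\alpha$, and then $\sigma_\beta\subseteq\bigcup_{\alpha\in\mathcal C}\sigma_\alpha$ for \emph{every} $\beta$, so no $x_\beta$ escapes the closure. The scale structure is irrelevant here; the obstruction is purely the countability of $D$.

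The paper's construction is built precisely to avoid this: it takes $X=[\omega]^{<\omega}\cup(\omega_1\times[\omega]^\omega)$, i.e.\ $\omega_1$ many disjoint clopen copies of the irrationals glued along a single common copy of the rationals. This space is not separable (indeed $c(X)=\omega_1$), and the uncountable closed discrete set $\omega_1\times\{\omega\}$ witnesses $wL_c(X)\ge\omega_1$ immediately. The $\mathfrak b$-scale is used not to define the space but only in the verification of cellular-Lindel\"ofness: given a cellular family $\{U_\alpha:\alpha<\omega_1\}$ with $U_\alpha=\{\beta_\alpha\}\times([s_\alpha;n_\alpha]\cap[\omega]^\omega)$, one builds a \emph{new} $\mathfrak b$-scale $\{y_\alpha\}$ with $y_\alpha\in[s_\alpha;n_\alpha]$ and shows that $L=\{(\beta_\alpha,y_\alpha):\alpha<\omega_1\}\cup[\omega]^{<\omega}$ is Lindel\"of, using concentration of $\{y_\alpha\}$ on $[\omega]^{<\omega}$. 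So the role of the scale is opposite to what you propose: it appears dynamically, tailored to each cellular family, rather than being baked into the definition of $X$.
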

	
	\begin{proof}
	As before, denote by $\mathcal{B}$ the standard base for the Cantor set topology on $\mathcal{P}(\omega)$. Let 
	$$X=[\omega]^{<\omega} \cup (\omega_1 \times [\omega]^\omega).$$
	We define a topology on $X$ as follows. A basic open neighbourhood for a point $(\alpha, x) \in \omega_1 \times [\omega]^\omega$ has the form $\{\alpha\} \times (B \cap [\omega]^\omega)$, where $B \in \mathcal{B}$. A basic open neighbourhood of a point $x \in [\omega]^{<\omega}$ is of the form
	$$(B \cap [\omega]^{<\omega}) \cup ((\omega_1 \setminus F) \times (B \cap [\omega]^\omega)),$$
	where $B \in \mathcal{B}$, $x \in B$ and $F \in [\omega_1]^{<\omega}$. Notice that $X$ is a Hausdorff space with a base of clopen sets, so it is zero-dimensional and, in particular, completely regular.
	
	We make two observations about the topology of $X$:
	
	\begin{enumerate}
	\item $\{\alpha \} \times [\omega]^\omega$ is a clopen subset of $X$ which is homeomorphic to the irrational numbers, for each $\alpha < \omega_1$, and
	\item $[\omega]^{<\omega}$ is a closed and nowhere dense subset of $X$ that is countable (in fact, homeomorphic to $\mathbb{Q}$).
	\end{enumerate}
	
	The set $\omega_1 \times \{\omega\}$ is closed discrete and of size $\omega_1$, so $wL_c(X) \geq \omega_1$. To see that $L(X) \leq \omega_1$, notice that $\{\alpha \} \times [\omega]^\omega$ is homeomorphic to the irrationals for each $\alpha < \omega_1$, so $X$ is the union of $\omega_1$ many Lindel\"of subspaces.
	
	Let us now prove that $X$ is cellular-Lindel\"of. Let $\mathcal{U}$ be a cellular family in $X$; we need to find a Lindel\"of subspace $L$ of $X$ which intersects each member of $\mathcal{U}$. Since $\omega_1 \times [\omega]^\omega$ is an open dense subset of $X$ with cellularity $\omega_1$, we may assume, without loss of generality, that $|\mathcal{U}|=\omega_1$ and that each member of $\mathcal{U}$ is a basic clopen subset of some $\{\beta\} \times [\omega]^\omega$.  Let $\{U_\alpha: \alpha < \omega_1 \}$ be an enumeration of $\mathcal{U}$. Then, for every $\alpha < \omega_1$, there are $\beta_\alpha < \omega_1$, $n_\alpha < \omega$ and $s_\alpha \in [\omega]^\omega$ such that
	$$U_\alpha=\{\beta_\alpha\} \times ([s_\alpha; n_\alpha] \cap [\omega]^\omega).$$

	Note that, since $[\omega]^\omega$ is a ccc space, the map $\alpha \longmapsto \beta_\alpha$ is countable-to-one.
	In order to define $L$, we construct a set of irrational numbers concentrated on the rationals. 
	
	\vspace{.2in}
	
	{\bf Claim.} There is a $\mathfrak{b}$-scale $Y=\{y_\alpha: \alpha < \omega_1 \} \subset [\omega]^\omega$ such that $y_\alpha \in [s_\alpha; n_\alpha]$, for every $\alpha < \omega_1$
	
	\begin{proof}[Proof of Claim]
	
	We can use $\mathfrak{b}=\omega_1$ to fix a $\mathfrak{b}$-scale $\{x_\alpha: \alpha < \omega_1 \} \subset [\omega]^\omega$. We recursively construct another $\mathfrak{b}$-scale $Y=\{y_\alpha: \alpha < \omega_1 \} \subset [\omega]^\omega$ with the additional property that $y_\alpha \cap n_\alpha = s_\alpha \cap n_\alpha$, for all $\alpha < \omega_1$ in the following way.
	
	Assume that $\{y_\alpha: \alpha < \beta \}$ has been chosen for some $\beta < \omega_1$. Since the set $S_\beta=\{y_\alpha: \alpha < \beta \} \cup \{x_\beta\}$ has cardinality less than $\mathfrak{b}$ we can find a $\leq^*$-bound $b_\beta \in [\omega]^\omega$ for $S_\beta$. Let $m_\beta=|s_\beta \cap n_\beta|$. We define $y_\beta$ in the following way. First, we choose the first $m_\beta$ elements of $y_\beta$ in such a way that $y_\beta \cap n_\beta=s_\beta  \cap n_\beta$. After this, we let $y_\beta(m_\beta+i)=n_\beta+b_\beta(m_\beta+i)$, for each $i < \omega$. In this way, $y_\beta$ is a well-defined infinite subset of $\omega$, $y_\beta \in [s_\beta; n_\beta]$ and $b_\beta \leq^* y_\beta$. It is therefore clear that $Y$ is a $\mathfrak{b}$-scale. 
	\renewcommand{\qedsymbol}{$\triangle$}
	\end{proof}
	
	Let now $L=\{(\beta_\alpha, y_\alpha): \alpha < \omega_1 \} \cup [\omega]^{<\omega}$ and note that $L$ intersects every member of $\mathcal{U}$. We claim that $L$ is Lindel\"of. Let $\mathcal{V}$ be an open cover of $L$ consisting of basic open sets. For each $x \in [\omega]^{<\omega}$ choose $V_x \in \mathcal{V}$ such that $x \in V_x$. Thus, given $x \in [\omega]^{<\omega}$, there exist $t_x \subset \omega$, $k_x < \omega$ and $F_x \in [\omega_1]^{<\omega}$ such that:
	$$V_x=([t_x; k_x] \cap [\omega]^{<\omega}) \cup ((\omega_1 \setminus F_x) \times ([t_x; k_x] \cap [\omega]^\omega)).$$
	
	Since $[\omega]^{<\omega}$ is countable, there exists $\lambda < \omega_1$ such that:
	$$\bigcup \{F_x: x \in [\omega]^{<\omega} \} \subset \lambda.$$
	Let $W=\bigcup \{[t_x; k_x]: x \in [\omega]^{<\omega} \}$. Since $Y$ is concentrated on $[\omega]^{<\omega}$ there exists $\eta <\omega_1$ such that $\{ y_\alpha: \eta \leq \alpha < \omega_1 \} \subset W$.
	
	Note first that $\mathcal{V}_0:=\{V_x: x \in [\omega]^{<\omega} \}$ is a countable subcollection of $\mathcal{V}$ that covers $[\omega]^{<\omega} \cup \{(\beta_\alpha, y_\alpha): \eta \leq \alpha < \omega_1, \lambda \leq \beta_\alpha \}$. 
	

	Since the map $\alpha \longmapsto \beta_\alpha$ is countable-to-one, the set $\{(\beta_\alpha, y_\alpha): \alpha < \omega_1, \beta_\alpha < \lambda \}$ is countable; similarly the set $\{(\beta_\alpha, y_\alpha): \alpha \leq \eta, \lambda \leq \beta_\alpha\}$ is also countable, so there is a countable subcollection $\mathcal{V}_1$ of $\mathcal{V}$ which covers both sets. It turns out that $\mathcal{V}_0 \cup \mathcal{V}_1$ is a countable subcollection of $\mathcal{V}$ which covers $L$.
\end{proof}

We now turn to the construction of a normal almost-cellular Lindel\"of space which is neither cellular-Lindel\"of nor weakly Lindel\"of. The authors of \cite{AGDW} proved that if $\mathcal{A}$ is a maximal almost disjoint family of uncountable subsets of $\omega_1$ then the Mr\'owka-Isbell $\Psi$-space over $\mathcal{A}$ is a Tychonoff non-normal example with all the required properties. We begin by proving that MADness of $\mathcal{A}$ is actually equivalent to almost cellular-Lindel\"ofness of the associated $\Psi$-space, which shows that different combinatorial tools are needed to get a normal example.

Let $\kappa$ be an infinite cardinal and let $\mathcal{A}$ be a family of $\kappa$-sized subsets of $\kappa$. Recall that $\mathcal{A}$ is said to be an almost disjoint family (AD family, for short) if $|A \cap B| < \kappa$, for every $A, B \in \mathcal{A}$ with $A \neq B$. An AD family is said to be a MAD family if it is a maximal AD family. 

Given an AD family $\mathcal{A}$, the $\Psi$-space over $\mathcal{A}$ is the space $\Psi(\mathcal{A})=\mathcal{A} \cup \kappa$, where points of $\kappa$ are isolated and a basic neighbourhood of a point $A \in \mathcal{A}$ is $\{A\} \cup (A \setminus F)$, where $|F| < \kappa$.

\begin{proposition}
Let $\mathcal{A}$ be an almost disjoint family on $\omega_1$. Then $\Psi(\mathcal{A})$ is almost cellular-Lindel\"of if and only if $\mathcal{A}$ is maximal almost disjoint.
\end{proposition}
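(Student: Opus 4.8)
The plan is to prove both implications separately, with the forward direction (almost cellular-Lindel\"of $\Rightarrow$ MAD) being the substantive one, since it is the new claim; the converse will follow from the general fact that MAD families yield almost cellular-Lindel\"of $\Psi$-spaces (essentially the argument of \cite{AGDW}), which I would reprove quickly for completeness.

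\medskip

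For the converse direction, suppose $\mathcal{A}$ is MAD and let $\mathcal{U}$ be a cellular family in $\Psi(\mathcal{A})$; we may assume each member of $\mathcal{U}$ is a basic open set, hence of the form $\{A_U\}\cup(A_U\setminus F_U)$ with $A_U\in\mathcal{A}$, $|F_U|<\omega_1$, or a singleton $\{\xi\}$ with $\xi\in\omega_1$. Since distinct members of $\mathcal{U}$ are disjoint and any two sets in $\mathcal{A}$ meet in a countable set, the assignment $U\mapsto A_U$ is at most countable-to-one on the non-isolated part; hence if $|\mathcal{U}|=\omega_1$ there is a subfamily $\mathcal{U}'$ of size $\omega_1$ with the $A_U$ pairwise distinct. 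The set $L=\bigcup\{A_U : U\in\mathcal{U}'\}\subset\omega_1$ is then a union of $\omega_1$ pairwise almost disjoint $\omega_1$-sized sets; I claim $L$ (as a subspace of $\Psi(\mathcal{A})$, i.e.\ a set of isolated points) meets every member of $\mathcal{U}'$ in an $\omega_1$-sized set, so certainly meets $\omega_1$-many members of $\mathcal{U}$. Since $L$ consists of isolated points it need not be Lindel\"of, so here I instead argue directly that $|\{U\in\mathcal{U} : U\cap L\neq\emptyset\}|=\omega_1=|\mathcal{U}|$, which is the definition of almost cellular-Lindel\"of. Actually the cleanest route for the converse is: if $|\mathcal{U}|\le\omega$ take $L$ a single point from each member; if $|\mathcal{U}|=\omega_1$, pick one point $p_U\in U\cap\omega_1$ from each $U$ and note $L=\{p_U:U\in\mathcal{U}\}$ has size $\omega_1$ and meets every member, and being a discrete subspace of size $\omega_1$ it is \emph{not} Lindel\"of---so one must use the ``almost'' clause and the argument really does need MADness to replace $L$ by a genuine Lindel\"of set. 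The honest statement is that $L':=\{A_U:U\in\mathcal{U}'\}\cup\bigcup\{A_U:U\in\mathcal{U}'\}$ is not Lindel\"of either; instead one fixes $\mathcal{A}$ MAD to guarantee that for \emph{any} $\omega_1$-sized subfamily $\mathcal{B}\subset\mathcal{A}$, some single $A\in\mathcal{A}$ has $A\cap(\bigcup\mathcal{B})$ of size $\omega_1$---no: that also fails. So for the converse I would simply cite the theorem of \cite{AGDW} that the $\Psi$-space over a MAD family on $\omega_1$ is almost cellular-Lindel\"of, as the excerpt already attributes this to them.

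\medskip

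The real content is the forward direction: \emph{if $\mathcal{A}$ is not MAD, then $\Psi(\mathcal{A})$ is not almost cellular-Lindel\"of}. So suppose $\mathcal{A}$ is AD but not maximal, and fix $B\in[\omega_1]^{\omega_1}$ with $|B\cap A|<\omega_1$ for every $A\in\mathcal{A}$. Write $B=\{\xi_\alpha:\alpha<\omega_1\}$ in increasing order and consider the cellular family $\mathcal{U}=\{\{\xi_\alpha\}:\alpha<\omega_1\}$ consisting of $\omega_1$-many isolated points of $B$. Now let $L$ be any Lindel\"of subspace of $\Psi(\mathcal{A})$; I want to show $|\{\alpha<\omega_1 : \xi_\alpha\in L\}|<\omega_1$, i.e.\ $|L\cap B|<\omega_1$, which will witness that $\mathcal{U}$ defeats the almost cellular-Lindel\"of property. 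The key point: a Lindel\"of subspace of $\Psi(\mathcal{A})$ can contain only countably many points of $\kappa$ outside the ``traces'' of the $\mathcal{A}$-points it contains. Precisely, let $\mathcal{A}_L=\mathcal{A}\cap L$ and $D_L=\omega_1\cap L$; since $\mathcal{A}$ is closed discrete in $\Psi(\mathcal{A})$ (any $\{A\}\cup(A\setminus F)$ contains no other member of $\mathcal{A}$), and $\mathcal{A}_L$ is a closed subset of the Lindel\"of space $L$, $\mathcal{A}_L$ is countable. Then cover $L$: each $A\in\mathcal{A}_L$ gets its basic neighbourhood trace, and each point of $D_L$ is isolated; Lindel\"ofness forces all but countably many points of $D_L$ to lie in $\bigcup_{A\in\mathcal{A}_L}A$. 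Hence $L\cap B\subseteq (\text{countable})\cup\bigcup_{A\in\mathcal{A}_L}(A\cap B)$, and since $\mathcal{A}_L$ is countable and each $A\cap B$ is countable (this is exactly where $B$ being almost disjoint from all of $\mathcal{A}$ is used!), $L\cap B$ is countable. Therefore no Lindel\"of $L$ meets $\omega_1$-many members of $\mathcal{U}$, so $\Psi(\mathcal{A})$ is not almost cellular-Lindel\"of.

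\medskip

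The main obstacle I anticipate is the lemma that a Lindel\"of subspace $L$ of $\Psi(\mathcal{A})$ has $\omega_1\cap L$ contained, modulo a countable set, in the union of the (countably many) members of $\mathcal{A}\cap L$. This requires two observations: first that $\mathcal{A}$ is closed and discrete in $\Psi(\mathcal{A})$ so that $\mathcal{A}\cap L$, being closed in the Lindel\"of $L$, is a countable set; and second a careful cover-and-extract argument showing the isolated points of $L$ not captured by neighbourhoods of points of $\mathcal{A}\cap L$ form a clopen discrete (hence countable, by Lindel\"ofness) subspace. Everything else---the choice of the cellular family of singletons along a witness $B$ to non-maximality, and the final counting---is routine once this lemma is in hand. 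I would state the lemma as a separate sentence inside the proof rather than a numbered lemma.
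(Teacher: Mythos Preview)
Your proof is correct and matches the paper's approach: both use the same key observation that any Lindel\"of subspace $L$ of $\Psi(\mathcal{A})$ satisfies $L\cap\omega_1\subseteq\bigcup(\mathcal{A}\cap L)\cup F$ for some countable $F$ (with $\mathcal{A}\cap L$ countable), and both simply cite \cite{AGDW} for the converse. The only cosmetic difference is that you argue the forward direction by contrapositive (fix a witness $B$ to non-maximality and show the singleton family over $B$ defeats almost cellular-Lindel\"ofness), while the paper argues it directly (start from an arbitrary uncountable $B$, apply almost cellular-Lindel\"ofness, and extract an $A_{n_0}\in\mathcal{A}$ with $|A_{n_0}\cap B|=\omega_1$); the underlying content is identical. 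Your several false starts on the converse should be deleted---just cite \cite{AGDW} from the outset, as the paper does.
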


\begin{proof}
The reverse implication was proved in \cite{AGDW}. For the direct implication, assume that $\Psi(\mathcal{A})$ is almost cellular-Lindel\"of and let $B$ be an uncountable subset of $\omega_1$. Then $\{\{\alpha\}: \alpha \in B \}$ is a cellular family in $\Psi(\mathcal{A})$ and therefore there is a Lindel\"of subspace $L \subset \Psi(\mathcal{A})$ and an uncountable set $C \subset B$ such that $L \cap \{\alpha \} \neq \emptyset$, for every $\alpha \in C$. 

Note that $\mathcal{A}$ is a closed discrete subset of $\psi(\mathcal{A})$. Therefore $L \cap \mathcal{A}$ is Lindel\"of and hence countable. It turns out that there are a countable family $\{A_n: n < \omega \} \subset \mathcal{A}$ and a countable set $F \subset \omega_1$ such that $L \subset \bigcup \{\{A_n\} \cup A_n: n < \omega \} \cup F$. Now, since $C$ is uncountable, there must be $n_0 < \omega$ and $D \subset C$ such that $\beta \in A_{n_0}$, for every $\beta \in D$. It follows that $A_{n_0} \cap B$ is uncountable, and therefore $\mathcal{A}$ is a maximal almost disjoint family.
\end{proof}

Note that the $\Psi$-space over an almost disjoint family on $\omega$ is always cellular-Lindel\"of, because it's separable, whereas the $\Psi$-space over an almost disjoint family on $\omega_1$ is never cellular-Lindel\"of.

Let $\kappa$ be a regular uncountable cardinal and let $A$ and $B$ be subsets of $\kappa$. We say that $A$ is almost contained in $B$, and we write $A \subseteq^* B$, if the set $A \setminus B$ has cardinality strictly smaller than $\kappa$.

Let $\mathcal{F}$ be a subfamily of $[\kappa]^\kappa$. We say that $\mathcal{F}$ has the strong intersection property if for every subfamily $\mathcal{G}$ of $\mathcal{F}$ of size $< \kappa$, the intersection $\bigcap \mathcal{G}$ has cardinality $\kappa$. Moreover, we say that $P \in [\kappa]^\kappa$ is a \emph{pseudointersection} for $\mathcal{F}$ if $P \subseteq^* F$, for every $F \in \mathcal{F}$. The cardinal $\mathfrak{p}_\kappa$ is defined as the minimal cardinality of a subfamily in $[\kappa]^\kappa$ which has no pseudointersection.

A sequence $\mathcal{T}=\{T_\alpha: \alpha < \lambda \} \subset [\kappa]^\kappa$ with the strong intersection property is called a tower if $T_\beta \subseteq^* T_\alpha$, for every $\alpha < \beta < \lambda$. The \emph{tower number} $\mathfrak{t}_\kappa$ is defined as the minimal cardinality of a tower $\mathcal{T} \subset [\kappa]^\kappa$. 

It is easy to see that these cardinals are well-defined and $\kappa^+ \leq \mathfrak{p}_\kappa \leq \mathfrak{t}_\kappa \leq 2^{\kappa}$ (see \cite{Sch}). So, in particular $\mathfrak{p}_\kappa=\mathfrak{t}_\kappa=\kappa^+$ if the GCH holds at $\kappa$. Malliaris and Shelah proved that $\mathfrak{p}_\omega=\mathfrak{t}_\omega$ in ZFC, but the question of whether $\mathfrak{p}_\kappa=\mathfrak{t}_\kappa$ in ZFC for every regular cardinal $\kappa$ (or even for $\kappa=\omega_1$) is still open. For some progress on that question, as well as a model of ZFC where $\mathfrak{p}_\kappa=\mathfrak{t}_\kappa=\kappa^+< 2^\kappa$ see \cite{FMSS}.



The following theorem consistently answers the first half of Question 2.8 from \cite{AGDW}.

\begin{theorem}
$\mathfrak{t}_{\omega_1}=\omega_2$ implies there is a normal almost cellular-Lindel\"of space $X$ which is neither cellular-Lindel\"of nor weakly Lindel\"of.
\end{theorem}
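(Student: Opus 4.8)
The plan is to produce a space of the form $X=\omega_1\cup\mathcal{A}\cup\{\infty\}$, where $\mathcal{A}$ is a maximal almost disjoint family on $\omega_1$ built from the tower, the points of $\omega_1$ are isolated, a basic neighbourhood of $A\in\mathcal{A}$ is $\{A\}\cup(A\setminus F)$ with $F\in[\omega_1]^{\le\omega}$, and a basic neighbourhood of $\infty$ is $\{\infty\}\cup(\mathcal{A}\setminus\mathcal{G})$ with $\mathcal{G}$ a countable subset of $\mathcal{A}$. The single extra point $\infty$ is exactly the device for getting normality out of the otherwise non-normal Mr\'owka-type space $\Psi(\mathcal{A})$: with this topology every uncountable subset of $\mathcal{A}$ accumulates at $\infty$, so a subset of $\mathcal{A}$ is closed in $X$ precisely when it is countable, and this kills the obstruction to normality carried in $\Psi(\mathcal{A})$ by the closed discrete uncountable subsets of $\mathcal{A}$. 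Checking that $X$ is Hausdorff and zero-dimensional (hence Tychonoff) is routine: the displayed basic neighbourhoods are clopen, using almost disjointness to carve the neighbourhoods of distinct $A$'s apart and countability of $\mathcal{G}$ to peel $\infty$'s neighbourhoods off any given $A$.

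The combinatorial heart, and the place where $\mathfrak{t}_{\omega_1}=\omega_2$ is genuinely used, is the construction of $\mathcal{A}$. I would start from a tower $\mathcal{T}=\{T_\alpha:\alpha<\omega_2\}\subseteq[\omega_1]^{\omega_1}$ and pass to the almost disjoint family $\{R_\alpha:\alpha<\omega_2\}$ of successive differences $R_\alpha=T_\alpha\setminus T_{\alpha+1}$ (after a harmless re-enumeration along a cofinal set one may assume each $R_\alpha$ is uncountable, using that $\mathcal{T}$ has no pseudointersection; the telescoping inclusion $R_\alpha\cap R_\beta\subseteq T_\beta\setminus T_{\alpha+1}$ for $\alpha<\beta$ gives almost disjointness). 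The strong intersection property gives for free that no countable subfamily of $\{R_\alpha\}$ almost covers $\omega_1$: indeed $\omega_1\setminus\bigcup_n R_{\alpha_n}\supseteq\bigcap_n T_{\alpha_n+1}$, which is uncountable. I would then enlarge $\{R_\alpha\}$ to a maximal almost disjoint $\mathcal{A}$, taking care --- this is the delicate bookkeeping --- to preserve that no countable subfamily almost covers $\omega_1$; alternatively one argues directly that a tower of length $\omega_2$ yields a MAD family on $\omega_1$ with this feature, which is where the length being exactly $\omega_2$ (not merely some uncountable ordinal) pays off.

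With such an $\mathcal{A}$ in hand the remaining properties follow. Not weakly Lindel\"of: the open cover by the singletons $\{\xi\}$ ($\xi<\omega_1$), the sets $\{A\}\cup A$ ($A\in\mathcal{A}$), and the neighbourhoods of $\infty$ has no countable subfamily with dense union, since the closure of such a union misses the uncountable set $\omega_1\setminus\bigcup_n A_n$ (no countable almost cover), which consists of isolated points. Not cellular-Lindel\"of: a Lindel\"of $L$ meeting every member of the cellular family $\{\{\xi\}:\xi<\omega_1\}$ must contain $\omega_1$, but in a countable subcover of the cover of $L$ by basic open sets only countably many of the sets $\{A\}\cup(A\setminus F)$ can occur (neighbourhoods of $\infty$ contain no point of $\omega_1$, singletons contribute only countably many), so the $\omega_1$-part of $L$ would be covered modulo a countable set by $\bigcup_n A_n$ for some $A_n\in\mathcal{A}$ --- again contradicting the absence of a countable almost cover. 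Almost cellular-Lindel\"of: given a cellular family $\mathcal{U}$ of size $\omega_1$, pick $x_U\in U$; if uncountably many $x_U$ lie in $\mathcal{A}$ then $\{\infty\}\cup\{x_U:x_U\in\mathcal{A}\}$ is Lindel\"of (any subspace of $X$ containing $\infty$ and otherwise lying in $\mathcal{A}$ is Lindel\"of, its neighbourhood trace at $\infty$ being co-countable) and meets $\omega_1$-many members of $\mathcal{U}$; otherwise uncountably many $x_U$ lie in $\omega_1$, and by maximality some $A^*\in\mathcal{A}$ meets this uncountable set uncountably, so the Lindel\"of subspace $\{A^*\}\cup A^*$ works. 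This is just the easy direction of the Proposition, the point $\infty$ having been added only to cover the degenerate case and to do nothing that spoils almost cellular-Lindel\"ofness.

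Finally, normality. Let $E,F$ be disjoint closed sets; the interesting case is $\infty\in E$, $\infty\notin F$. Since $F$ is closed and misses $\infty$, it misses a co-countable neighbourhood of $\infty$, so $F\cap\mathcal{A}$ is countable; and for each $A\in F\cap\mathcal{A}$ we have $A\notin\overline{E}$, so $A\cap E$ is countable. Assign to each $A\in F\cap\mathcal{A}$ a neighbourhood $\{A\}\cup(A\setminus H_A)$ with $H_A\supseteq A\cap(E\cap\omega_1)$, to each $A'\in E\cap\mathcal{A}$ a neighbourhood $\{A'\}\cup(A'\setminus H_{A'})$ with $H_{A'}\supseteq (A'\cap F)\cup\bigcup\{A\cap A':A\in F\cap\mathcal{A}\}$ --- a countable set, since $F\cap\mathcal{A}$ is countable and $\mathcal{A}$ is almost disjoint --- and to $\infty$ the neighbourhood $\{\infty\}\cup(\mathcal{A}\setminus(F\cap\mathcal{A}))$; together with the open sets $E\cap\omega_1$ and $F\cap\omega_1$ these assemble into disjoint open sets around $E$ and $F$. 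The case $\infty\notin E\cup F$ is easier, since then $E\cap\mathcal{A}$ and $F\cap\mathcal{A}$ are both countable and one argues inside $\Psi(\mathcal{A})$ with the same absorption. Thus I expect the only real obstacle in the whole argument to be the combinatorial lemma of the second paragraph --- extracting from the tower a MAD family on $\omega_1$ with no countable almost cover --- because once $\mathcal{A}$ is right, normality is handed over for free by the capping point $\infty$.
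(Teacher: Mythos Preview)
There is a genuine gap in your construction: the neighborhood system you specify at $\infty$ is incompatible with the one at points of $\mathcal{A}$. If $A\in\mathcal{A}\setminus\mathcal{G}$ lies in the proposed basic set $\{\infty\}\cup(\mathcal{A}\setminus\mathcal{G})$, then every basic neighborhood $\{A\}\cup(A\setminus F)$ of $A$ contains uncountably many points of $\omega_1$ and hence is not contained in $\{\infty\}\cup(\mathcal{A}\setminus\mathcal{G})$. So the latter is not open. Once you pass to the topology these sets actually generate, an open $U\ni\infty$ must contain all but countably many $A\in\mathcal{A}$ and, for each such $A$, a co-countable subset of $A$; since $\mathcal{A}$ is MAD, the set $\omega_1\setminus U$ then meets every member of $\mathcal{A}$ countably and is therefore countable. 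It follows that your $X$ is in fact Lindel\"of: one member of any open cover handles $\infty$, countably many more handle the missed points of $\mathcal{A}$, and only countably many isolated points of $\omega_1$ remain. So your space is both weakly Lindel\"of and cellular-Lindel\"of, the opposite of what is claimed. You should also have been suspicious that the hypothesis $\mathfrak{t}_{\omega_1}=\omega_2$ is never genuinely used: as your own argument shows, \emph{any} uncountable MAD family on $\omega_1$ has the ``no countable almost cover'' property, so the tower plays no role.

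The paper's construction is quite different and exploits the linear order of the tower in an essential way. Instead of capping $\Psi(\mathcal{A})$ with a single point, one sets $X=\omega_1\cup\{T_\alpha:\alpha\in S\}$ with $S=\{\alpha<\omega_2:\mathrm{cf}(\alpha)\neq\omega\}$, giving the tower points an ordinal-type topology: a basic neighborhood of $T_\alpha$ is $\{T_\gamma:\gamma\in(\beta,\alpha]\cap S\}\cup(T_\beta\setminus(T_\alpha\cup C))$ for $\beta<\alpha$ and countable $C$. This yields a regular $P$-space of countable extent (whence normality), and it is not weakly Lindel\"of because the closed copy of $S$ has length $\omega_2$ rather than terminating at a single limit. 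The obstruction you ran into---that a point absorbing co-countably many members of $\mathcal{A}$ forces Lindel\"ofness via MADness---is exactly what the long ordinal structure is designed to avoid.
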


\begin{proof}
Fix a tower $\mathcal{T}'=\{T_\alpha: \alpha < \omega_2 \}$ of uncountable subsets of $\omega_1$. Let $S=\{\alpha < \omega_2: cf(\alpha) \neq \omega \}$ and let $\mathcal{T}=\{T_\alpha: \alpha \in S \}$. Let $X=\mathcal{T} \cup \omega_1$. Declare every point of $\omega_1$ to be isolated and declare a basic neighbourhood of a point $T_\alpha \in \mathcal{T}$ to be 
$$U(\beta, \alpha, C):=\{T_\gamma: \gamma \in (\beta, \alpha] \cap S\} \cup T_\beta \setminus (T_\alpha \cup C)$$
where $\beta < \alpha$ and $C$ is a countable set. 

\vspace{.2in}

\noindent {\bf Claim 1} The space $X$ is a $P$-space.

\begin{proof}[Proof of Claim 1] It suffices to prove that every point of $X$ is a $P$-point. This is obviously true for every point of $\omega_1$, as well as for every $T_\alpha$ such that $\alpha$ is a successor ordinal. Now fix a limit ordinal $\alpha \in S$, ordinals $\beta_n < \alpha$ and a countable subset $C_n$ of $X$, for every $n< \omega$.  We will prove that $\bigcap \{U(\beta_n, \alpha, C_n): n < \omega \}$ is an open neighbourhood of $T_\alpha$. Indeed, let $\beta=\sup \{\beta_n: n < \omega \}<\alpha$ and let $F_n= T_\beta \setminus T_{\beta_n}$, which is a countable set. Finally, let $C=\bigcup \{C_n \cup F_n: n < \omega \}$. Then
$$T_\alpha \in U(\beta, \alpha, C) \subset \bigcap \{U(\beta_n, \alpha, C_n): n < \omega \}$$
which proves that $T_\alpha$ is a $P$-point.
\renewcommand{\qedsymbol}{$\triangle$}
\end{proof}

\noindent {\bf Claim 2.} The space $X$ has countable extent.

\begin{proof}[Proof of Claim 2] First of all note that the subspace $\mathcal{T}$ is closed in $X$ and has countable extent. Indeed, $\mathcal{T}$ is homeomorphic to $S$ with the order topology. Now let $A$ be a subspace of $S$ having cardinality $\omega_1$; there must be an ordinal $\gamma < \omega_2$ such that $A \subset \gamma$. But then $A$ is a subset of $S \cap (\gamma+1)$, which is a Lindel\"of space. It turns out that $A$ must have an accumulation point.

Now suppose by contradiction that $X$ contains an uncountable closed discrete subset $D$. We can assume $|D|=\omega_1$ and, by the above argument, we can also assume that $D \subset \omega_1$. Since $\mathcal{T}$ has no uncountable pseudointersection, we can consider the minimum ordinal $\alpha < \kappa$ such that $D \setminus T_\alpha$ is uncountable. It is easy to see that $T_\alpha$ is an accumulation point of $D$ (as a matter of fact, $D$ is a transfinite sequence converging to $T_\alpha$).

\renewcommand{\qedsymbol}{$\triangle$}
\end{proof}

\noindent {\bf Claim 3.} The space $X$ is not cellular-Lindel\"of.
	
	\begin{proof}[Proof of Claim 3]
	
Let $\mathcal{U}=\{\{\alpha\}: \alpha < \omega_1 \}$ which is a cellular family in $X$. Suppose by contradiction that there exists a Lindel\"of subspace $L$ of $X$ such that $L \cap U \neq \emptyset$, for every $U \in \mathcal{U}$. Then $\omega_1 \subset L$, so, in particular $L$ is dense in $X$. But since $L$ is a Lindel\"of subspace of a $P$-space then $L$ must be closed in $X$. It follows that $L$ must be equal to $X$, but that's a contradiction, since $X$ is easily seen not to be Lindel\"of: it suffices to note that $\mathcal{T}$ is closed in $X$ and homeomorphic to $S$ with the order topology.
	\renewcommand{\qedsymbol}{$\triangle$}
	\end{proof}
	
	\noindent {\bf Claim 4.} The space $X$ is almost cellular-Lindel\"of.

         \begin{proof}[Proof of Claim 4] Let $\{U_\alpha: \alpha < \omega_1 \}$ be a cellular family. Since $X$ is a $P$-space we can assume each $U_\alpha$ is clopen. By countable extent there must be $\tau < \omega_2$ such that $T_\tau \in \overline{\bigcup \{U_\alpha: \alpha < \omega_1\}} \setminus \bigcup \{U_\alpha: \alpha < \omega_1 \}$, or otherwise $\{U_\alpha: \alpha < \omega_1 \}$ would be a discrete family. Since $\chi(T_\tau, X)=\omega_1$, there is an $\omega_1$-sequence $S \subset \bigcup \{U_\alpha: \alpha < \omega_1\}$ which converges to $T_\tau$. Moreover, since $T_\tau \notin U_\alpha= \overline{U_\alpha}$ for every $\alpha < \omega_1$, the intersection $S \cap U_\alpha$ is at most countable, for every $\alpha < \omega_1$. Therefore $L=S \cup \{T_\tau\}$ is a Lindel\"of subspace of $X$ which intersects $\omega_1$ many $U_\alpha$'s. \renewcommand{\qedsymbol}{$\triangle$}

\end{proof}

\noindent {\bf Claim 5.} The space $X$ is not weakly Lindel\"of.

\begin{proof}[Proof of Claim 5]

For every $\alpha \in S$, let $U_\alpha$ be the following open subset of $X$:
$$U_\alpha=U(0, \alpha, \emptyset)=\{T_\gamma: \gamma \in [0, \alpha] \cap S \} \cup T_0 \setminus T_\alpha.$$
Then $\{U_\alpha: \alpha < \omega_2 \} \cup \{\{\beta\}: \beta < \omega_1\}$ is an open cover of $X$. 

Suppose by contradiction that there are countably many ordinals $\{\alpha_n: n < \omega \} \subset S$ and countably many ordinals $\{\beta_n: n < \omega \} \subset \omega_1$ such that 

$$X \subset \overline{\bigcup \{U_{\alpha_n}: n < \omega \} \cup \bigcup \{\{\beta_n\}: n < \omega\}}.$$

Let $\gamma \in S$ be an ordinal such that $\sup \{\alpha_n: n < \omega \} <\gamma$ and let $C=\bigcup \{T_\gamma \setminus T_{\alpha_n}: n < \omega \} \cup \{\beta_n: n < \omega \}$. Then $U(\gamma, \gamma+1, C)$ is an open subset of $X$ which is disjoint from $\bigcup \{U_{\alpha_n}: n < \omega \} \cup \bigcup \{\{\beta_n\}: n < \omega\}$, but that is a contradiction.\renewcommand{\qedsymbol}{$\triangle$}
\end{proof}

\end{proof}

We would like to finish by constructing a ZFC example of a space for which having the cellular-Lindel\"of property is independent of ZFC. Our example is a byproduct of a positive result (Corollary $\ref{corpoint}$) which has independent interest. We finally prove that also the normality of our example is independent of ZFC. It appears that the existence of such an example is also new.

Recall that a space $X$ is said to be \emph{strongly cellular-Lindel\"of} (see \cite{B}) if for every cellular family $\mathcal{U}$ in $X$ there is a \emph{closed} Lindel\"of subspace $L$ of $X$ such that $L \cap U \neq \emptyset$, for every $U \in \mathcal{U}$. Note that every cellular-compact space is strongly cellular-Lindel\"of. Moreover, cellular-Lindel\"ofness and strong cellular-Lindel\"ofness are equivalent for $P$-spaces.

\begin{lemma} \label{plemma}
Let $X$ be a $P$-space. Then $X$ is cellular-Lindel\"of if and only if $X$ is strongly cellular-Lindel\"of.
\end{lemma}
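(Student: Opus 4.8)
The nontrivial direction is to show that if a $P$-space $X$ is cellular-Lindelöf then it is strongly cellular-Lindelöf; the converse is immediate since any closed Lindelöf subspace is in particular a Lindelöf subspace. So fix a cellular family $\mathcal{U}$ in $X$ and, using cellular-Lindelöfness, choose a Lindelöf subspace $L$ of $X$ meeting every member of $\mathcal{U}$. The obvious candidate for a \emph{closed} Lindelöf subspace witnessing strong cellular-Lindelöfness is $\overline{L}$, and the whole argument reduces to the claim that in a $P$-space the closure of a Lindelöf subspace is again Lindelöf (and of course $\overline{L}$ still meets every $U \in \mathcal{U}$, since each $U$ is open and already meets $L$).

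**Key step: closures of Lindelöf sets in $P$-spaces are Lindelöf.** This is the heart of the matter and is presumably the lemma the authors have in mind; note it was already used implicitly in Claim 3 of the previous theorem (``$L$ is a Lindelöf subspace of a $P$-space, so $L$ is closed''). In fact I would first prove the stronger fact that \emph{every Lindelöf subspace of a $P$-space is closed}: if $L \subseteq X$ is Lindelöf and $p \in \overline{L} \setminus L$, then for each $x \in L$ pick, using the Hausdorff property, disjoint open sets $V_x \ni x$ and $W_x \ni p$; the $V_x$ cover $L$, so countably many $V_{x_n}$ suffice, and then $W := \bigcap_{n<\omega} W_{x_n}$ is open (this is exactly where the $P$-space hypothesis is used: a countable intersection of open sets is open), contains $p$, and is disjoint from $\bigcup_n V_{x_n} \supseteq L$, contradicting $p \in \overline{L}$. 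Hence $\overline{L} = L$, so $L$ itself is already closed, and $L$ is the desired closed Lindelöf subspace meeting every member of $\mathcal{U}$.

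**Assembling the proof.** Given the above, the argument is short: assume $X$ is a cellular-Lindelöf $P$-space and let $\mathcal{U}$ be a cellular family; take a Lindelöf $L \subseteq X$ with $L \cap U \neq \emptyset$ for all $U \in \mathcal{U}$; by the displayed observation $L$ is closed in $X$; hence $L$ witnesses that $X$ is strongly cellular-Lindelöf. Conversely, a closed Lindelöf subspace is a Lindelöf subspace, so strong cellular-Lindelöfness trivially implies cellular-Lindelöfness. (One should also remark that the last sentence before the lemma — ``cellular-Lindelöfness and strong cellular-Lindelöfness are equivalent for $P$-spaces'' — is literally the statement being proved, so the lemma is just the formal write-up of that remark.)

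**Main obstacle.** There is essentially no obstacle: the only subtlety is making sure the covering extraction in the ``Lindelöf subspaces of $P$-spaces are closed'' step is carried out correctly — one covers $L$ (not $X$) by the relatively open sets $V_x \cap L$, extracts a countable subcover indexed by $\{x_n : n<\omega\}$, and then takes the intersection of the corresponding $W_{x_n}$ \emph{in $X$}, which is open precisely because $X$ is a $P$-space. Everything else is bookkeeping.
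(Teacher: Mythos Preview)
Your proposal is correct and follows exactly the paper's approach: the paper's entire proof is the one-line observation that every Lindel\"of subspace of a $P$-space is closed, which is precisely the key step you isolate and prove in detail. You have simply supplied the standard argument for that fact (Hausdorff separation plus the $P$-space property to intersect countably many open neighbourhoods of $p$), which the paper takes for granted.
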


\begin{proof}
It suffices to note that every Lindel\"of subspace of a $P$-space is closed.
\end{proof}

\begin{lemma} 
Let $X$ be a strongly cellular-Lindel\"of space. Then every regular closed subset of $X$ is strongly cellular-Lindel\"of.
\end{lemma}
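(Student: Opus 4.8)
The plan is to push a cellular family on a regular closed set $F$ down to a cellular family on $X$, apply strong cellular-Lindel\"ofness in $X$, and then pull the resulting closed Lindel\"of subspace back to $F$. Write $F=\overline{V}$ where $V=\operatorname{int}_X F$; the two elementary facts about $V$ that I will use repeatedly are that $V\subseteq F$ and that $V$ is dense in $F$.

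So let $\mathcal{W}$ be a cellular family in the subspace $F$. For each $W\in\mathcal{W}$ I would fix an open set $G_W$ of $X$ with $W=F\cap G_W$, and set $W^{*}=G_W\cap V$. The first step is to check that $\{W^{*}:W\in\mathcal{W}\}$ is a cellular family in $X$. For nonemptiness, pick $p\in W$: then $p\in\overline{V}$ and $G_W$ is an open neighbourhood of $p$ in $X$, so $G_W\cap V\neq\emptyset$. For pairwise disjointness, since $V\subseteq F$ we get $W_1^{*}\cap W_2^{*}=V\cap G_{W_1}\cap G_{W_2}\subseteq F\cap G_{W_1}\cap G_{W_2}=W_1\cap W_2$, which is empty whenever $W_1\neq W_2$; injectivity of $W\mapsto W^{*}$ follows. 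Note also that $W^{*}=V\cap G_W\subseteq F\cap G_W=W$.

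Now apply strong cellular-Lindel\"ofness of $X$ to $\{W^{*}:W\in\mathcal{W}\}$ to obtain a closed Lindel\"of subspace $L$ of $X$ with $L\cap W^{*}\neq\emptyset$ for every $W\in\mathcal{W}$. Put $L'=L\cap F$. Then $L'$ is closed in the subspace $F$; moreover $L'$ is closed in $X$ (intersection of the closed sets $L$ and $F$), hence closed in the Lindel\"of space $L$, hence Lindel\"of. Finally, for every $W\in\mathcal{W}$ we have $\emptyset\neq L\cap W^{*}\subseteq L\cap W=L'\cap W$, using $W^{*}\subseteq W\subseteq F$. Thus $L'$ is a closed Lindel\"of subspace of $F$ meeting every member of $\mathcal{W}$, which is exactly what the definition of strong cellular-Lindel\"ofness for $F$ demands.

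I do not expect a genuine obstacle here beyond careful bookkeeping. The one point that must be handled correctly is the choice of witnessing opens: one should intersect the $G_W$'s with $V=\operatorname{int}_X F$ rather than with $F$ itself, so that the derived family $\{W^{*}\}$ consists of sets open in $X$ while still being pairwise disjoint (which uses $V\subseteq F$) and still catching points of each $W$ (which uses the density of $V$ in $F$). Everything on the Lindel\"of side --- that $L\cap F$ is closed in $F$, and that a closed subset of a Lindel\"of space is Lindel\"of --- is immediate.
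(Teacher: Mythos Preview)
Your proof is correct and follows essentially the same route as the paper: intersect the witnessing opens with $\operatorname{int}_X F$ to obtain a cellular family in $X$, apply strong cellular-Lindel\"ofness there, and intersect the resulting closed Lindel\"of set with $F$. You simply fill in the verifications (nonemptiness via density of $V$ in $F$, disjointness via $V\subseteq F$, and Lindel\"ofness of $L\cap F$) that the paper leaves implicit.
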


\begin{proof}
Let $F$ be a regular closed subset of $X$ (that is, $F=\overline{Int(F)}$). Let $\mathcal{G}$ be a cellular family in $F$. Then for every $G \in \mathcal{G}$ there is an open subset $U_G$ of $X$ such that $G=U_G \cap F$. It turns out that $\mathcal{U}=\{U_G \cap Int(F): G \in \mathcal{G} \}$ is a cellular family in $X$ and therefore there is a closed Lindel\"of subspace $L$ of $X$ such that $L \cap U \neq \emptyset$, for every $U \in \mathcal{U}$. Then $L \cap F$ is a closed Lindel\"of subspace of $F$ such that $(L \cap F) \cap G$ is non-empty, for every $G \in \mathcal{G}$ and therefore $F$ is strongly cellular-Lindel\"of.
\end{proof}

\begin{theorem} \label{propdisj}
Let $X$ be a strongly cellular-Lindel\"of regular space. If $p \in X$ does not have a disjoint local $\pi$-base then $X \setminus \{p\}$ is strongly cellular-Lindel\"of.
\end{theorem}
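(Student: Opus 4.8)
The plan is to take a cellular family $\mathcal{G}$ in $Y := X \setminus \{p\}$ and manufacture from it a cellular family in $X$ to which strong cellular-Lindel\"ofness of $X$ can be applied, then correct the resulting closed Lindel\"of set so that it avoids $p$. The key hypothesis — that $p$ has no disjoint local $\pi$-base — should be used in exactly the following form: \emph{whenever $\mathcal{W}$ is a family of pairwise disjoint nonempty open subsets of $X$, at most countably many members of $\mathcal{W}$ can fail to have $p$ in their closure.} Indeed, suppose infinitely (say uncountably, but even countably infinitely suffices for the contradiction) many members $W_n$ of $\mathcal{W}$ satisfy $p \notin \overline{W_n}$; for each such $n$ pick an open $W_n' \subseteq W_n$ with $p \notin \overline{W_n'}$, and then by regularity shrink inside $X \setminus \overline{W_n'}$ near $p$. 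A careful bookkeeping of these disjoint sets produces a disjoint local $\pi$-base at $p$, contradicting the hypothesis. (I would state and prove this as a short preliminary claim; getting the quantifiers exactly right here is the first thing to pin down.)

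Given that claim, start with a cellular family $\mathcal{G}$ in $Y$; since $Y$ is open in $X$, each $G \in \mathcal{G}$ is open in $X$ as well, so $\mathcal{G}$ is already a cellular family in $X$. Split $\mathcal{G} = \mathcal{G}_0 \cup \mathcal{G}_1$ where $\mathcal{G}_0 = \{ G \in \mathcal{G} : p \notin \overline{G}\}$ (countable, by the claim) and $\mathcal{G}_1 = \{G \in \mathcal{G} : p \in \overline{G}\}$. For the members of $\mathcal{G}_1$ I would like to refine them away from $p$: for each $G \in \mathcal{G}_1$ choose, using regularity, a nonempty open $H_G$ with $\overline{H_G} \subseteq G \setminus \{p\}$ — here one uses that $p$ is not an interior point obstruction, i.e. $G \setminus \{p\}$ is open and nonempty and contains a point whose closure we can separate from $p$. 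Then $\mathcal{H} := \{H_G : G \in \mathcal{G}_1\} \cup \mathcal{G}_0$ is still a cellular family in $X$. Apply strong cellular-Lindel\"ofness of $X$: there is a closed-in-$X$ Lindel\"of subspace $L$ meeting every member of $\mathcal{H}$.

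Now correct $L$. The set $L$ might contain $p$. Consider $L' := L \setminus \{p\}$. It meets each $H_G$ (since $p \notin H_G$, even $p \notin \overline{H_G}$) and each $G \in \mathcal{G}_0$, hence it meets every member of $\mathcal{G}$. It is Lindel\"of as an open subspace of the Lindel\"of $P$-... — wait, $X$ need not be a $P$-space here, so instead: $L'$ is Lindel\"of because $L' = L \cap (X \setminus \{p\})$ and in fact one should arrange $L$ to avoid $p$ from the start. The cleaner route: after getting $L$, note $L \cap H_G \neq \emptyset$ and $p \notin \overline{H_G} \supseteq$ an open nbhd disjoint from $p$; more robustly, replace $L$ by $L \setminus \{p\}$ and observe it is still \emph{closed in $X \setminus \{p\}$} automatically, and Lindel\"of since removing one point from a Lindel\"of space leaves a Lindel\"of space (a point, being closed, has Lindel\"of complement-in-$L$... actually $L \setminus \{p\}$ is Lindel\"of because it is an $F_\sigma$? no). The safe justification: $L \setminus \{p\}$ is Lindel\"of because $\{p\}$ is closed in $L$, so $L \setminus \{p\}$ is open in $L$, and an open subspace of a Lindel\"of space with the property that... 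Hmm — in general open subspaces of Lindel\"of spaces are \emph{not} Lindel\"of, so I must instead ensure $p \notin L$ directly. This is the main obstacle, and I would resolve it by incorporating the separation into the cellular family: for each $H_G$ also fix an open $O_G \ni$ some point with $\overline{O_G} \cap \{p\} = \emptyset$ inside $H_G$; since the $\overline{O_G}$ are closed sets avoiding $p$ and meeting $L$ is forced, and since $L$ is Lindel\"of we can cover $L$ cleverly — or simply: run strong cellular-Lindel\"ofness in the regular space $X$ but note that the witnessing closed Lindel\"of $L$, intersected with the closed set $X \setminus \bigcup\{\text{open nbhds of } p\}$... The honest fix is that $L \cap H_G \neq\emptyset$ for all $G$, so $L \cap \overline{\bigcup \mathcal{H}}$ already does the job and $\overline{\bigcup\mathcal{H}}$ need not contain $p$ only if $\mathcal{H}$ is discrete at $p$, which by the claim we can arrange. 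So: by the claim applied to $\mathcal{H}$ (all of whose members have $\overline{H_G} \not\ni p$ and $\mathcal{G}_0$ countable), $p \notin \overline{\bigcup(\mathcal{H} \setminus \text{countably many})}$; discard that countable part, re-apply, and conclude $L$ may be taken inside $X \setminus \{p\}$. Writing this last manoeuvre precisely — ensuring the discarded countable families don't break "meets every member of $\mathcal{G}$" — is where I expect to spend the most care, but it is bookkeeping, not a genuine difficulty.
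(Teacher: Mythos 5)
Your proof collapses at the very first step: the ``key claim'' you extract from the hypothesis is false, and it is not what ``$p$ has no disjoint local $\pi$-base'' means. A local $\pi$-base at $p$ is a family of nonempty open sets such that \emph{every neighbourhood of $p$ contains a member of the family}; it has nothing to do with $p$ lying in the closures of the members, and there is no cardinality clause anywhere. Your claim --- that in any pairwise disjoint open family at most countably many members can have $p \notin \overline{W}$ --- fails in essentially every example: in $\mathbb{R}$ the intervals $(n,n+1)$ are disjoint, none has $0$ in its closure, and they certainly do not yield a local $\pi$-base at $0$ (no member is contained in $(-1,1)$); in the paper's own example $\sigma(2^\kappa)_\delta$ there are cellular families of size $\omega_1$ consisting of clopen sets missing $p$, all of whose closures avoid $p$, while $p$ has no disjoint local $\pi$-base. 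Since the split $\mathcal{G}=\mathcal{G}_0\cup\mathcal{G}_1$ and everything after it depends on $\mathcal{G}_0$ being countable, the argument does not recover. The later difficulties you flag (whether $L\setminus\{p\}$ is Lindel\"of, how to force $p\notin L$) are symptoms of having used the hypothesis in the wrong form.

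The correct use of the hypothesis is both simpler and global: a cellular family $\mathcal{U}$ in $X\setminus\{p\}$ is itself a pairwise disjoint family of nonempty open subsets of $X$, so by hypothesis it is \emph{not} a local $\pi$-base at $p$; hence there is a \emph{single} neighbourhood $V$ of $p$ containing no member of $\mathcal{U}$, i.e.\ $U\setminus V\neq\emptyset$ for every $U\in\mathcal{U}$, and by regularity one may arrange $U\setminus\overline{V}\neq\emptyset$ for every $U$. Then $\{U\setminus\overline{V}:U\in\mathcal{U}\}$ is a cellular family in the regular closed set $F=\overline{X\setminus\overline{V}}$, which misses $p$ (since $V$ is an open neighbourhood of $p$ disjoint from $X\setminus\overline{V}$) and which is strongly cellular-Lindel\"of by the preceding lemma on regular closed subsets. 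The closed Lindel\"of set $L\subset F$ meeting every $U\setminus\overline{V}$ is then automatically a closed Lindel\"of subset of $X\setminus\{p\}$ meeting every member of $\mathcal{U}$, so no ``correction'' of $L$ is ever needed.
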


\begin{proof}
Assume that $p$ does not have a disjoint local $\pi$-base and let $\mathcal{U}$ be a cellular family in $X \setminus \{p\}$. Then $\mathcal{U}$ is not a local $\pi$-base at $p$ and hence there is a neighbourhood $V$ of $p$ such that $U \setminus V \neq \emptyset$, for every $U \in \mathcal{U}$. Since $X$ is a regular space we can actually assume that $U \setminus \overline{V} \neq \emptyset$, for every $U \in \mathcal{U}$ and therefore $\{U \setminus \overline{V}: U \in \mathcal{U} \}$ is a cellular family in the regular closed subspace $F=\overline{X \setminus \overline{V}}$ of $X$. By the lemma above we can find a closed Lindel\"of subspace $L \subset F$ such that $L \cap (U \setminus \overline{V}) \neq \emptyset$, for every $U \in \mathcal{U}$. But then $L$ is a closed Lindel\"of subspace of $X \setminus \{p\}$ which meets every member of $\mathcal{U}$ and that proves that $X \setminus \{p\}$ is strongly cellular-Lindel\"of.
\end{proof}

\begin{theorem} \label{propcell}
Let $X$ be a regular $P$-space and $p \in X$ be a non-isolated point. If $X \setminus \{p\}$ is cellular-Lindel\"of then $p$ does not have a disjoint local $\pi$-base.
\end{theorem}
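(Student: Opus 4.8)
We argue by contraposition: assuming that $p$ has a disjoint local $\pi$-base $\mathcal{U}$, we show that $X\setminus\{p\}$ is not cellular-Lindel\"of. Before the main argument I would make two reductions. First, since the members of $\mathcal{U}$ are pairwise disjoint, at most one of them contains $p$; discarding that member (if it exists) still leaves a local $\pi$-base at $p$ because $p$ is not isolated, so we may assume $p\notin U$ for every $U\in\mathcal{U}$, and then $\mathcal{U}$ is a genuine cellular family in $X\setminus\{p\}$. Second, $\mathcal{U}$ must be uncountable: in a $P$-space no non-isolated point has a countable local $\pi$-base, since for a countable family $\{W_n:n<\omega\}$ of non-empty open sets one can pick $x_n\in W_n\setminus\{p\}$, and then $X\setminus\{x_n:n<\omega\}$ is a $G_\delta$, hence open since $X$ is a $P$-space, and is a neighbourhood of $p$ containing no $W_n$.

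Now suppose for contradiction that $X\setminus\{p\}$ is cellular-Lindel\"of. Then there is a Lindel\"of subspace $L$ of $X\setminus\{p\}$ meeting every member of $\mathcal{U}$, and since a Lindel\"of subspace of a $P$-space is closed (Lemma \ref{plemma}), $L$ is closed in $X\setminus\{p\}$; hence $Z:=\overline{L}^{X}$ equals either $L$ or $L\cup\{p\}$. If $Z=L$, i.e.\ $L$ is closed in $X$, then $X\setminus L$ is an open neighbourhood of $p$ disjoint from $L$, so it contains some $U\in\mathcal{U}$, contradicting $L\cap U\neq\emptyset$. So the real case, and the heart of the proof, is $Z=L\cup\{p\}$.

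In that case one checks that $Z$ is Lindel\"of (an open cover has a member $W$ containing $p$; then $L\setminus W$ is closed in $L$, hence Lindel\"of, hence coverable by countably many more members). Thus $Z$ is a closed Lindel\"of regular $P$-space; being Lindel\"of and regular it is normal, hence Tychonoff, hence --- as a Tychonoff $P$-space --- zero-dimensional (preimages of points under continuous real-valued functions are closed $G_\delta$'s, hence clopen, and they separate points from closed sets). Moreover $p$ is non-isolated in $Z$ since $p\in\overline{L}=\overline{Z\setminus\{p\}}$, the traces $\{U\cap L:U\in\mathcal{U}\}$ form an uncountable disjoint local $\pi$-base at $p$ in $Z$ with no member containing $p$, and refining each trace to a clopen subset of $Z$ gives such a base $\{C_i:i\in I\}$ consisting of clopen sets, with $|I|\geq\omega_1$.

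Finally, put $N=Z\setminus\bigcup_{i\in I}C_i$, a closed set containing $p$, and for each $n\in N\setminus\{p\}$ choose a clopen $W_n\ni n$ with $p\notin W_n$. Then $\{C_i:i\in I\}\cup\{W_n:n\in N\setminus\{p\}\}$ is an open cover of $Z\setminus\{p\}=L$, which is Lindel\"of, so it has a countable subcover $\{C_{i_k}:k<\omega\}\cup\{W_{n_k}:k<\omega\}$. Here the $P$-space hypothesis is used decisively: both $\bigcup_k C_{i_k}$ and $\bigcup_k W_{n_k}$ are countable unions of closed sets, hence closed in $Z$. Picking $i^{*}\in I\setminus\{i_k:k<\omega\}$, the set $C_{i^{*}}$ is disjoint from every $C_{i_k}$ and is covered, so $C_{i^{*}}\subseteq\bigcup_k W_{n_k}$; more generally $\bigcup\{C_i:i\in I\setminus\{i_k:k<\omega\}\}\subseteq\bigcup_k W_{n_k}$. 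But $p$ lies in the closure of the left-hand set: for any neighbourhood $V$ of $p$, the open set $V\setminus\bigcup_k C_{i_k}$ still contains $p$ and therefore contains some $C_i$, which necessarily has $i\notin\{i_k:k<\omega\}$. Hence $p\in\overline{\bigcup_k W_{n_k}}=\bigcup_k W_{n_k}$, so $p\in W_{n_k}$ for some $k$ --- contradicting $p\notin W_{n_k}$. The step I expect to require the most ingenuity is precisely this last one: recognizing that passing to the Lindel\"of closure $Z=L\cup\{p\}$ makes zero-dimensionality and the ``$F_\sigma$ equals closed'' property available, and then engineering the cover of $Z\setminus\{p\}$ so that its countable subcover forces $p$ into one of the closed sets $W_{n_k}$.
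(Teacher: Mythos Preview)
Your proof is correct, but you have worked much harder than necessary because you overlooked the full strength of the closure fact you quoted. You wrote that, since a Lindel\"of subspace of a $P$-space is closed, $L$ is closed in $X\setminus\{p\}$. But $X$ itself is the $P$-space, and $L$ is a Lindel\"of subspace of $X$; therefore $L$ is closed in $X$, not merely in $X\setminus\{p\}$. Hence $Z=\overline{L}^{X}=L$ automatically, and your ``real case'' $Z=L\cup\{p\}$ is vacuous. Everything after that point---the uncountability of $\mathcal{U}$, the passage to a Lindel\"of regular $P$-space, zero-dimensionality, the clopen refinement, and the cover argument---can be deleted.

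What remains is exactly the paper's proof: after thinning $\mathcal{B}$ so that $p\notin\overline{B}$ for every $B\in\mathcal{B}$, one gets a Lindel\"of $L\subset X\setminus\{p\}$ meeting every $B\in\mathcal{B}$; since $\mathcal{B}$ is a local $\pi$-base, $p\in\overline{L}^{X}$; since $L$ is Lindel\"of in the $P$-space $X$, $L=\overline{L}^{X}$; hence $p\in L\subset X\setminus\{p\}$, a contradiction. Your handling of the case $Z=L$ is an equivalent phrasing of this. So the approaches coincide once the redundant case is removed; the paper simply never splits into cases.
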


\begin{proof}
Assume that $X \setminus \{p\}$ is cellular-Lindel\"of and suppose by contradiction that $p$ has a disjoint local $\pi$-base $\mathcal{B}$. By thinning out the elements of $\mathcal{B}$ we can assume that $p \notin \overline{B}$ for every $B \in \mathcal{B}$ and therefore $\mathcal{B}$ is a cellular family in $X \setminus \{p\}$. Hence there exists a Lindel\"of subspace $L$ of $X \setminus \{p\}$ such that $L \cap B \neq \emptyset$, for every $B \in \mathcal{B}$. From the definition of a local $\pi$-base, $p \in \overline{L}$. Since a Lindel\"of subspace of a $P$-space is closed, then $L=\overline{L}$. Thus, $p\in L\subset X\setminus\{p\}$. But that is a contradiction and hence we are done.
\end{proof}

The assumption that $X$ is a $P$-space cannot be removed from the proposition above. Indeed, if $X$ is any compact first-countable space without isolated points then, for every point $p \in X$, the space $X \setminus \{p\}$ is $\sigma$-compact and hence strongly cellular-Lindel\"of, but $p$ has a disjoint local $\pi$-base.

\begin{corollary} \label{corpoint}
Let $X$ be a cellular-Lindel\"of $P$-space and $p \in X$ be a point. Then $X \setminus \{p\}$ is cellular-Lindel\"of if and only if $p$ does not have a disjoint local $\pi$-base.
\end{corollary}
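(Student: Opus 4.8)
The plan is to read off the corollary as the conjunction of Theorems~\ref{propdisj} and~\ref{propcell}, using Lemma~\ref{plemma} to translate between the ``strong'' and ``plain'' versions of cellular-Lindel\"ofness. (As in those two theorems, $X$ is understood to be regular; all spaces here are Hausdorff.) The one preliminary observation is that $X\setminus\{p\}$ is an \emph{open} subspace of $X$, since $\{p\}$ is closed; hence it is again a regular $P$-space, and by Lemma~\ref{plemma}, applied both to $X$ and to $X\setminus\{p\}$, the properties ``cellular-Lindel\"of'' and ``strongly cellular-Lindel\"of'' coincide for each of them. I would also dispose of the degenerate case at the outset: we may assume $p$ is non-isolated, this being the hypothesis under which Theorem~\ref{propcell} operates (for an isolated $p$ the statement is to be read accordingly).

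For the implication from right to left, suppose $p$ does not have a disjoint local $\pi$-base. Since $X$ is a regular cellular-Lindel\"of $P$-space, Lemma~\ref{plemma} makes it \emph{strongly} cellular-Lindel\"of, so Theorem~\ref{propdisj} applies and yields that $X\setminus\{p\}$ is strongly cellular-Lindel\"of; in particular it is cellular-Lindel\"of.

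For the converse, suppose $X\setminus\{p\}$ is cellular-Lindel\"of. Then $X$ is a regular $P$-space, $p$ is a non-isolated point of $X$, and $X\setminus\{p\}$ is cellular-Lindel\"of, so Theorem~\ref{propcell} applies verbatim and gives that $p$ does not have a disjoint local $\pi$-base. This settles both directions.

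The corollary therefore carries no genuinely new content beyond Theorems~\ref{propdisj} and~\ref{propcell}, and I do not expect any real obstacle. The only points deserving a moment's care are (i) checking that $X\setminus\{p\}$, being open in $X$, still satisfies the hypotheses (regular $P$-space) required to invoke those theorems, and (ii) the bookkeeping between the strong and plain forms of cellular-Lindel\"ofness, which is harmless precisely because every space in sight is a $P$-space, so Lemma~\ref{plemma} applies.
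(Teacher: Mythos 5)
Your proposal is correct and is essentially the paper's own proof: the paper simply says ``combine Theorems \ref{propdisj} and \ref{propcell} with Lemma \ref{plemma},'' which is exactly the assembly you carry out. Your extra remarks (that $X\setminus\{p\}$ is an open, hence regular $P$-, subspace, and the caveat about isolated $p$) are sensible bookkeeping but do not change the argument.
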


\begin{proof}
Combine Theorem $\ref{propdisj}$ and $\ref{propcell}$ with Lemma $\ref{plemma}$.
\end{proof}


Given an infinite cardinal $\kappa$ recall that the $\sigma$-product of $2^\kappa$ is defined as $\sigma(2^\kappa)=\{x \in 2^\kappa: |x^{-1}(1)| < \aleph_0 \}$. Given a space $X$, recall that the $G_\delta$ topology $X_\delta$ is the topology on $X$ generated by the $G_\delta$ subsets of $X$ It is well-known that $\sigma(2^\kappa)_\delta$, is a Lindel\"of space. The easiest way to see this is to note that $\sigma(2^\kappa)$ is the union of countably many compact scattered spaces and use a result of Arhangel'skii stating that the $G_\delta$ topology on a compact scattered space is Lindel\"of (see \cite{ArhCp}).

\begin{corollary} \label{notCH}
Let $\kappa \geq \omega_2$ and $X=\sigma(2^\kappa)_\delta$. Then $X \setminus \{p\}$ is cellular-Lindel\"of, for every $p \in X$.
\end{corollary}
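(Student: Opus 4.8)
The plan is to apply Corollary~\ref{corpoint}. First I would dispose of the easy structural points: $\sigma(2^\kappa)$ is zero-dimensional and Hausdorff, so its $G_\delta$-modification $X=\sigma(2^\kappa)_\delta$ is a zero-dimensional, hence regular, $P$-space, and $X$ is cellular-Lindel\"of simply because it is Lindel\"of. The space $X$ is also homogeneous, via the self-homeomorphisms $x\mapsto x\,\triangle\,y$ of $\sigma(2^\kappa)$ (for $y\in\sigma(2^\kappa)$), each of which carries $y$ to the constant function $\mathbf 0$; and every point of $X$ is non-isolated, since singletons are not $G_\delta$ in $\sigma(2^\kappa)$ when $\kappa$ is uncountable (any $G_\delta$ around a point contains $[x;C]\cap\sigma(2^\kappa)$ for some countable $C\subsetneq\kappa$). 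By Corollary~\ref{corpoint} it therefore suffices to show that no point of $X$ has a disjoint local $\pi$-base, and by homogeneity I may take the point to be $\mathbf 0$.

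So suppose $\mathcal B$ is a disjoint local $\pi$-base at $\mathbf 0$. Using that the clopen sets $[q;A]\cap\sigma(2^\kappa)$ with $A\in[\kappa]^{\le\omega}$ form a base, I would fix for each $B\in\mathcal B$ such a box $[q_B;A_B]\cap\sigma(2^\kappa)\subseteq B$ with $F_B:=q_B^{-1}(1)\subseteq A_B$ (so $F_B$ is finite). Two quick observations: (a) no $B\in\mathcal B$ contains $\mathbf 0$ — otherwise, choosing a strictly smaller neighbourhood $V\subsetneq B$ of $\mathbf 0$, the element of $\mathcal B$ that $V$ must contain would, by disjointness, have to be $B$ itself, giving $B\subseteq V\subsetneq B$; consequently $F_B\neq\emptyset$ for every $B$ (if $F_B=\emptyset$ then $\mathbf 0\in[q_B;A_B]\cap\sigma(2^\kappa)\subseteq B$). (b) For every countable $A\subseteq\kappa$ some $B\in\mathcal B$ lies inside $[\mathbf 0;A]\cap\sigma(2^\kappa)$, and then $[q_B;A_B]\cap\sigma(2^\kappa)\subseteq[\mathbf 0;A]\cap\sigma(2^\kappa)$ forces $A\subseteq A_B$. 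Taking $A=\{\alpha\}$ for each $\alpha<\kappa$ shows $\kappa=\bigcup_{B\in\mathcal B}A_B$, hence $|\mathcal B|\ge\kappa\ge\omega_2$; this is the one place the hypothesis $\kappa\ge\omega_2$ is used.

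Next I would convert disjointness into combinatorics. For $B\neq B'$ the boxes $[q_B;A_B]\cap\sigma(2^\kappa)$ and $[q_{B'};A_{B'}]\cap\sigma(2^\kappa)$ are disjoint iff $q_B$ and $q_{B'}$ disagree somewhere on $A_B\cap A_{B'}$ (a common point would be a finite-support function extending $q_B\restriction A_B$ and $q_{B'}\restriction A_{B'}$, and one exists as soon as these restrictions are compatible), i.e., since the base point is $\mathbf 0$, iff $F_B\cap A_{B'}\neq F_{B'}\cap A_B$; in particular the $F_B$ are pairwise distinct. Now apply the $\Delta$-system lemma to the $\ge\omega_2$ many finite sets $F_B$ to obtain $\{B_\xi:\xi<\omega_2\}\subseteq\mathcal B$ with $F_{B_\xi}=R\cup E_\xi$ for a fixed finite root $R$ and pairwise disjoint finite sets $E_\xi$. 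Writing $A_\xi:=A_{B_\xi}$ and using $R\subseteq F_{B_\xi}\subseteq A_\xi$ for all $\xi$, the inequality $F_{B_\xi}\cap A_\eta\neq F_{B_\eta}\cap A_\xi$ reduces to $E_\xi\cap A_\eta\neq E_\eta\cap A_\xi$, which in particular forces $E_\xi\cap A_\eta\neq\emptyset$ or $E_\eta\cap A_\xi\neq\emptyset$ for all $\xi\neq\eta<\omega_2$. Setting $H_\xi:=\{\eta<\omega_2:E_\eta\cap A_\xi\neq\emptyset\}$, each $H_\xi$ is countable (the $E_\eta$ are pairwise disjoint and $A_\xi$ is countable), and we have $\xi\in H_\eta$ or $\eta\in H_\xi$ whenever $\xi\neq\eta$.

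The main obstacle — and the heart of the argument — is the final step: deriving a contradiction from the existence of countable sets $H_\xi\subseteq\omega_2$ ($\xi<\omega_2$) such that $\xi\in H_\eta$ or $\eta\in H_\xi$ for all $\xi\neq\eta$. I would do this by pressing down: for every $\xi<\omega_2$ with $\mathrm{cf}(\xi)=\omega_1$ the set $H_\xi\cap\xi$ is countable, hence bounded below $\xi$, so $g(\xi):=\sup(H_\xi\cap\xi)<\xi$ is regressive on the stationary set $\{\xi<\omega_2:\mathrm{cf}(\xi)=\omega_1\}$; by Fodor's lemma $g$ is constant, with some value $\delta^\ast$, on a stationary set $T$. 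Fixing $\eta_0\in T$ above $\delta^\ast$, every $\xi\in T$ above $\eta_0$ satisfies $\eta_0\notin H_\xi$ (as $H_\xi\cap\xi\subseteq\delta^\ast+1$), hence $\xi\in H_{\eta_0}$; so $H_{\eta_0}$ contains the uncountable set $\{\xi\in T:\xi>\eta_0\}$, contradicting its countability. This completes the proof. (Note the argument genuinely breaks for $\kappa=\omega_1$, where one only gets $|\mathcal B|\ge\omega_1$ and no pressing-down is available at the countable-cofinality limits of $\omega_1$, in keeping with the fact that $\mathbf 0$ does have a disjoint local $\pi$-base in $\sigma(2^{\omega_1})_\delta$.)
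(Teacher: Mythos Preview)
Your proof is correct, and it follows the same overall strategy as the paper --- apply Corollary~\ref{corpoint}, so the task is to show that no point of $X=\sigma(2^\kappa)_\delta$ has a disjoint local $\pi$-base --- but the way you carry out that last step is genuinely different. The paper argues in two lines: on one hand $\pi\chi(p,X)=\mathrm{cof}([\kappa]^\omega,\subseteq)\ge\kappa\ge\omega_2$, so any local $\pi$-base has size at least $\omega_2$; on the other hand $c(X)=\omega_1$, quoting Uspenskij's theorem that every uncountable Lindel\"of $P$-group has cellularity $\omega_1$. A disjoint local $\pi$-base would simultaneously be a local $\pi$-base (hence of size $\ge\omega_2$) and a cellular family (hence of size $\le\omega_1$), which is impossible.

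Your argument reproduces the $\pi$-character lower bound (your observation that $\bigcup_{B\in\mathcal B}A_B=\kappa$ forces $|\mathcal B|\ge\kappa$), but instead of invoking the black-box cellularity bound you give a self-contained combinatorial proof that the particular cellular family at hand cannot have size $\omega_2$: the $\Delta$-system refinement followed by the pressing-down argument on $\{\xi<\omega_2:\mathrm{cf}(\xi)=\omega_1\}$. In effect you are reproving, by hand and for this specific family, the relevant instance of $c(\sigma(2^\kappa)_\delta)\le\omega_1$. This buys you a proof that is elementary and free of the external citation, at the cost of some length; the paper's version is shorter but leans on Uspenskij. Both are valid routes to the same conclusion.
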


\begin{proof}
Note first that $\pi \chi(p, X)=cof([\kappa]^\omega, \subset) \geq \kappa \geq \omega_2$. To see why the equality is true, then let $\mathcal{U}$ be a local $\pi$-base at $p$. Without loss of generality we can assume that $\mathcal{U}$ is made up of basic open sets and enumerate $\mathcal{U}$ as $\{[\sigma_\alpha]: \alpha < \lambda \}$, where $\sigma_\alpha \in Fn(\kappa, 2, \omega_1)$, for every $\alpha < \lambda$ and given $\sigma \in Fn(\kappa, 2, \omega_1)$, $[\sigma]=\{f \in X: \sigma \subseteq f \}$. Given $A \in [\kappa]^\omega$ we see that  $[\sigma_\alpha] \subseteq [p \upharpoonright A]$ if and only if $p \upharpoonright A \subseteq \sigma$, which is equivalent to $A \subset dom(\sigma)$ and $p(\beta)=\sigma(\beta)$ for every $\beta \in A$. It follows that the family of domains of elements of $\mathcal{U}$ is cofinal in $([\kappa]^\omega, \subseteq)$. Viceversa, if $\mathcal{A}$ is a cofinal family in $([\kappa]^\omega, \subseteq)$ then $\{[p \upharpoonright A]: A \in \mathcal{A} \}$ is even a local base at $p$.

 On the other hand $c(X)=\omega_1$, because $X$ is an uncountable Lindel\"of $P$-group (see \cite{U}). It follows that $p$ cannot have a disjoint local $\pi$-base and hence by Corollary $\ref{corpoint}$ the space $X \setminus \{p\}$ is cellular-Lindel\"of.
\end{proof}

\begin{corollary} \label{CH}
The space $\sigma(2^{\omega_1})_\delta \setminus \{p\}$ is not cellular-Lindel\"of, for every $p \in \sigma(2^{\omega_1})$.
\end{corollary}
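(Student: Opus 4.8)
The plan is to deduce the statement from Corollary~\ref{corpoint}. Put $X=\sigma(2^{\omega_1})_\delta$; this is a $P$-space (the $G_\delta$-modification of any space is a $P$-space, since the family of $G_\delta$ subsets of a space is a base closed under countable intersections) and it is Lindel\"of, hence cellular-Lindel\"of. By Corollary~\ref{corpoint}, for $p\in X$ the space $X\setminus\{p\}$ is cellular-Lindel\"of if and only if $p$ has no disjoint local $\pi$-base; so what must be shown is that \emph{every} point of $X$ \emph{does} have a disjoint local $\pi$-base. Moreover $X$ is a topological group (the $P$-modification of a topological group is again a topological group), hence homogeneous, so it suffices to produce a disjoint local $\pi$-base at the single point $p=\mathbf 0$, the constant function $0$.

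As recalled in the proof of Corollary~\ref{notCH}, the clopen sets $[\mathbf 0\upharpoonright A]=\{f\in\sigma(2^{\omega_1}):f^{-1}(1)\cap A=\emptyset\}$, for $A$ a countable subset of $\omega_1$, form a neighbourhood base at $\mathbf 0$ in $X$. For each $\alpha<\omega_1$ let $e_\alpha\in\sigma(2^{\omega_1})$ be the characteristic function of $\{\alpha\}$ and put
$$V_\alpha=[e_\alpha\upharpoonright(\alpha+1)]=\{f\in\sigma(2^{\omega_1}):f^{-1}(1)\cap(\alpha+1)=\{\alpha\}\}.$$
Since $\alpha+1$ is countable, $V_\alpha$ is a basic clopen subset of $X$, and it is nonempty since $e_\alpha\in V_\alpha$. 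The claim to verify is that $\mathcal V=\{V_\alpha:\alpha<\omega_1\}$ is a disjoint local $\pi$-base at $\mathbf 0$. It is pairwise disjoint: if $\alpha<\beta$ and $f\in V_\alpha\cap V_\beta$, then $\alpha\in f^{-1}(1)\cap(\beta+1)=\{\beta\}$, which is absurd. It is a local $\pi$-base at $\mathbf 0$: given a countable $A\subseteq\omega_1$, pick $\alpha<\omega_1$ with $A\subseteq\alpha$; then for every $f\in V_\alpha$ one has $f^{-1}(1)\cap A\subseteq f^{-1}(1)\cap(\alpha+1)\cap A=\{\alpha\}\cap A=\emptyset$, so $V_\alpha\subseteq[\mathbf 0\upharpoonright A]$. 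Thus $\mathbf 0$, and by homogeneity every point of $X$, has a disjoint local $\pi$-base, and Corollary~\ref{corpoint} gives the conclusion.

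I do not anticipate any genuine obstacle; the construction is complete modulo two routine points: that the $P$-modification of a topological group is a topological group (given a $G_\delta$-neighbourhood $\bigcap_n W_n$ of $ab$, choose open $U_n\ni a$, $V_n\ni b$ with $U_nV_n\subseteq W_n$ and intersect), and the description of the basic open sets of $\sigma(2^{\omega_1})_\delta$ used above. It is worth adding a sentence on the contrast with Corollary~\ref{notCH}: for $\kappa\ge\omega_2$ a disjoint local $\pi$-base at $p$ cannot exist because $\pi\chi(p,X)=\operatorname{cof}([\kappa]^\omega,\subseteq)\ge\kappa>\omega_1=c(X)$, while for $\kappa=\omega_1$ one has $\operatorname{cof}([\omega_1]^\omega,\subseteq)=\omega_1=c(X)$ and the cofinal chain of initial segments of $\omega_1$ leaves exactly enough room to spread the sets $V_\alpha$ out into a pairwise disjoint family.
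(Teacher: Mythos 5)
Your proposal is correct and takes essentially the same route as the paper: both arguments reduce Corollary~\ref{CH} to showing that every point of $\sigma(2^{\omega_1})_\delta$ has a disjoint local $\pi$-base and then apply Corollary~\ref{corpoint}. The only difference is that where the paper invokes the general result from \cite{BS4} that every $P$-space of character $\omega_1$ has a disjoint local $\pi$-base at each point, you construct such a $\pi$-base explicitly (the sets $V_\alpha$, transported by homogeneity of the group), which makes the proof self-contained; your verification of disjointness and of the $\pi$-base property is correct.
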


\begin{proof}
Being a $P$-space of character $\omega_1$, the space $\sigma(2^{\omega_1})_\delta$ has a disjoint local $\pi$-base at every point (see \cite{BS4}) and therefore the statement follows from Corollary $\ref{corpoint}$.
\end{proof}

As a byproduct we obtain a ZFC example of a space whose cellular-Lindel\"of property is independent of ZFC.

\begin{theorem} \label{independent}
There is a Tychonoff space $X$ such that $X$ is cellular-Lindel\"of if and only if CH fails. 
\end{theorem}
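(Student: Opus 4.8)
The plan is to let $X=\sigma(2^{\mathfrak{c}})_\delta\setminus\{p\}$, where $\mathfrak{c}=2^{\aleph_0}$ and $p$ is any point of $\sigma(2^{\mathfrak{c}})$, say the identity $\mathbf{0}$ of the group $\sigma(2^{\mathfrak{c}})$. The key point is that this definition is carried out in ZFC and does not mention CH; the choice of $p$ is immaterial because $\sigma(2^{\mathfrak{c}})_\delta$ is a topological group (being the $P$-modification of the topological group $\sigma(2^{\mathfrak{c}})$) and is therefore homogeneous, so $\sigma(2^{\mathfrak{c}})_\delta\setminus\{p\}$ does not depend on $p$ up to homeomorphism.

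First I would verify that $X$ is Tychonoff. The sets $[\sigma]$ with $\sigma\in Fn(\mathfrak{c},2,\omega_1)$ form a base of clopen sets for $\sigma(2^{\mathfrak{c}})_\delta$, so this space is zero-dimensional and $T_1$, hence Tychonoff; and $X$, being an open subspace of it, is Tychonoff as well. Then the two implications are exactly Corollaries $\ref{notCH}$ and $\ref{CH}$. If CH fails then $\mathfrak{c}\geq\omega_2$, so Corollary $\ref{notCH}$ (applied with $\kappa=\mathfrak{c}$) says that $X=\sigma(2^{\mathfrak{c}})_\delta\setminus\{p\}$ is cellular-Lindel\"of. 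If CH holds then $\mathfrak{c}=\omega_1$, so $X=\sigma(2^{\omega_1})_\delta\setminus\{p\}$, which is not cellular-Lindel\"of by Corollary $\ref{CH}$. Combining, $X$ is cellular-Lindel\"of if and only if CH fails.

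I do not expect a genuine obstacle here: all the real work is done in the corollaries, which in turn rest on Corollary $\ref{corpoint}$ and the computations $\pi\chi(p,\sigma(2^{\kappa})_\delta)=\mathrm{cof}([\kappa]^\omega,\subseteq)$ and $c(\sigma(2^{\kappa})_\delta)=\omega_1$. The only point deserving a word of care is conceptual rather than technical, namely that $X$ is a single fixed space: it is built from the cardinal $\mathfrak{c}$, which is a well-defined object of ZFC, and the desired biconditional then follows from the trivial equivalence ``$\mathfrak{c}\geq\omega_2$ iff CH fails''. One may also note in passing that $\sigma(2^{\mathfrak{c}})_\delta$ itself is always (even Lindel\"of, hence) cellular-Lindel\"of, so it is precisely the removal of the point $p$ that makes the property sensitive to the size of the continuum.
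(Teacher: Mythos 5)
Your proposal is correct and is essentially identical to the paper's proof: both take $X=\sigma(2^{\mathfrak{c}})_\delta\setminus\{\mathbf{0}\}$ and apply Corollary \ref{notCH} when $\mathfrak{c}\geq\omega_2$ and Corollary \ref{CH} when $\mathfrak{c}=\omega_1$. The extra remarks on zero-dimensionality and homogeneity are fine but not needed beyond what the corollaries already supply.
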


\begin{proof}
It suffices to set $X=\sigma(2^{\mathfrak{c}})_\delta \setminus \{\mathbf{0} \}$ and use Corollaries $\ref{notCH}$ and $\ref{CH}$.
\end{proof}

We finally prove that the construction from Theorem $\ref{independent}$ provides an example of a space whose normality is independent of ZFC.

\begin{theorem}
There is a ZFC example of a space $X$ such that $X$ is normal under CH and $X$ is not normal under not CH.
\end{theorem}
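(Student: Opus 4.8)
The space in question is $X=\sigma(2^{\mathfrak c})_\delta\setminus\{\mathbf 0\}$ from Theorem \ref{independent}, and I would treat the two cases separately, using the facts established above that $\sigma(2^\kappa)_\delta$ is a zero-dimensional Lindel\"of $P$-space and that $\chi(\mathbf 0,\sigma(2^\kappa)_\delta)=cof([\kappa]^\omega,\subseteq)$ (from the proof of Corollary \ref{notCH}).

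\textbf{Normality under CH.} Here $\mathfrak c=\omega_1$, so $\chi(\mathbf 0,X)=cof([\omega_1]^\omega,\subseteq)=\omega_1$. Since $\sigma(2^{\omega_1})_\delta$ is a zero-dimensional $P$-space, I would construct by transfinite recursion a $\subseteq$-decreasing neighbourhood base $\{V_\xi:\xi<\omega_1\}$ at $\mathbf 0$ consisting of clopen sets: at stage $\xi$ one intersects the ${\le}\,\omega$ many previously chosen clopen sets with a clopen set from a fixed base of size $\omega_1$, and the result is still clopen by the $P$-space property. Then $\bigcap_{\xi<\omega_1}V_\xi=\{\mathbf 0\}$, so the sets $D_\xi:=\bigl(\bigcap_{\eta<\xi}V_\eta\bigr)\setminus V_\xi$ (with $D_0=\sigma(2^{\omega_1})_\delta\setminus V_0$) partition $X$ into sets that are clopen in $\sigma(2^{\omega_1})_\delta$. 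Each $D_\xi$ is closed in the Lindel\"of space $\sigma(2^{\omega_1})_\delta$, hence Lindel\"of and regular, hence normal; and since the $D_\xi$ are clopen in $X$ and cover it, $X$ is the topological sum of the family $\{D_\xi:\xi<\omega_1\}$. A topological sum of normal spaces is normal, so $X$ is normal.

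\textbf{Non-normality under $\neg$CH.} It suffices to prove in ZFC that $Z:=\sigma(2^{\omega_2})_\delta\setminus\{\mathbf 0\}$ is not normal: under $\neg$CH one has $\mathfrak c\ge\omega_2$, and identifying $\omega_2$ with a subset of $\mathfrak c$ makes $\{x\in\sigma(2^{\mathfrak c}):x^{-1}(1)\subseteq\omega_2\}$ a closed copy of $\sigma(2^{\omega_2})_\delta$ inside $\sigma(2^{\mathfrak c})_\delta$, so $Z$ becomes a closed subspace of $X$ and non-normality transfers. Identifying $\sigma(2^{\omega_2})$ with $[\omega_2]^{<\omega}$ via supports, a basic clopen neighbourhood of $F$ is $V(F,A)=\{G:G\cap A=F\}$ for countable $A\supseteq F$. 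Fix disjoint $S_0,S_1\in[\omega_2]^{\omega_2}$ and set $H=\{\{\alpha\}:\alpha\in S_0\}$, $K=\{\{\beta\}:\beta\in S_1\}$; these are disjoint subsets of the closed discrete subspace $[\omega_2]^1$ of $Z$, hence disjoint closed subsets of $Z$ (here one checks that a point of $Z$ of size ${\ge}2$ has a basic neighbourhood missing $[\omega_2]^1$, and each $\{\alpha\}$ has one meeting $[\omega_2]^1$ only in itself). I claim $H$ and $K$ cannot be separated. If $U\supseteq H$ and $W\supseteq K$ were disjoint open sets, pick for each $\alpha\in S_0$ a countable $A_\alpha\ni\alpha$ with $V(\{\alpha\},A_\alpha)\subseteq U$ and for each $\beta\in S_1$ a countable $B_\beta\ni\beta$ with $V(\{\beta\},B_\beta)\subseteq W$. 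Since $\{\alpha,\beta\}\in V(\{\alpha\},A_\alpha)$ when $\beta\notin A_\alpha$ and $\{\alpha,\beta\}\in V(\{\beta\},B_\beta)$ when $\alpha\notin B_\beta$, disjointness of $U$ and $W$ forces: for all $\alpha\in S_0$ and $\beta\in S_1$, $\beta\in A_\alpha$ or $\alpha\in B_\beta$. Now choose $W_0\in[S_0]^{\omega_1}$; since $\bigl|W_0\cup\bigcup_{\alpha\in W_0}A_\alpha\bigr|\le\omega_1<\omega_2=|S_1|$, there is $\beta^*\in S_1\setminus\bigl(W_0\cup\bigcup_{\alpha\in W_0}A_\alpha\bigr)$. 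Then $\beta^*\notin A_\alpha$ for every $\alpha\in W_0$, hence $\alpha\in B_{\beta^*}$ for every $\alpha\in W_0$, i.e. $W_0\subseteq B_{\beta^*}$ — contradicting $|B_{\beta^*}|\le\omega$. Thus $Z$, and therefore $X$, is not normal; together with Corollaries \ref{notCH} and \ref{CH} this is consistent with the rest of the paper ($X$ is cellular-Lindel\"of exactly when CH fails, normal exactly when CH holds).

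\textbf{Expected main obstacle.} The substantive part is the non-normality direction: one must single out closed sets whose separation is obstructed purely ``at the deleted point'' $\mathbf 0$, and recognize that this obstruction is precisely the ZFC failure of a Kuratowski-style decomposition $\omega_2\times\omega_2=R_1\cup R_2$ with $R_1$ having countable vertical sections and $R_2$ countable horizontal sections — a failure with no analogue for $\omega_1\times\omega_1$, which is exactly why the space stays normal under CH. The bookkeeping that $H,K$ are closed and that the chosen basic neighbourhoods behave as stated is routine but should be carried out with some care.
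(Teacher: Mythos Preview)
Your argument is correct and uses the same space as the paper, but the two halves are handled a bit differently. For the CH direction, the paper simply cites Isbell's result that every $P$-space of cardinality at most $\mathfrak c$ is paracompact, whereas you give a self-contained argument: you build a decreasing clopen local base of length $\omega_1$ at $\mathbf 0$ and use it to split $X$ into a clopen partition of Lindel\"of pieces. Your route is more elementary and avoids the external reference, at the cost of being specific to this situation; the paper's citation gives a stronger conclusion (paracompactness) for free. For the $\neg$CH direction the two proofs are essentially the same combinatorial argument on the closed discrete set of singletons: the paper works directly in $\sigma(2^{\mathfrak c})_\delta\setminus\{\mathbf 0\}$ with an asymmetric partition of $\mathfrak c$ into a piece of size $\aleph_1$ and its complement, while you first pass to the closed copy of $\sigma(2^{\omega_2})_\delta\setminus\{\mathbf 0\}$ and use a symmetric partition before extracting an $\omega_1$-sized subset; the counting that forces an uncountable set into a single countable $dom(\sigma_\gamma)$ (your $B_{\beta^*}$) is identical in spirit.
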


\begin{proof}
Again, let $X=\sigma(2^{\mathfrak{c}})_\delta \setminus \{\mathbf{0} \}$. Under CH, every $P$-space of cardinality continuum is even paracompact (see \cite{I}). Thus, $X$ is normal under CH.

Assume now that $\mathfrak{c}>\aleph_1$, and let $e_\alpha \in 2^{\mathfrak{c}}$ be the function defined as follows:
$$e_\alpha(\beta)=\begin{cases}
0, \textrm{ if } \beta \neq \alpha,\textrm{ and} \\
1, \textrm{ if } \beta=\alpha.
\end{cases}$$
Then $\{e_\alpha: \alpha < \mathfrak{c}\}$ is a closed discrete subset of $X$. Let $\{A, B \}$ be a partition of $\mathfrak{c}$ into two sets such that $|A|=\aleph_1$. We will show that the disjoint closed sets $F_A=\{e_\alpha: \alpha \in A\}$ and $F_B=\{e_\alpha: \alpha \in B \}$ cannot be separated. Indeed, suppose by contradiction that $U_A$ and $U_B$ are disjoint open subsets of $X$ such that $F_A \subset U_A$ and $F_B \subset U_B$.


For every $\alpha \in A$ let $\sigma_\alpha: \mathfrak{c} \to 2$ be a countable partial function such that $e_\alpha \in [\sigma_\alpha] \subset U_A$ and, for every $\beta \in B$, let $\sigma_\beta: \mathfrak{c} \to 2$ be a countable partial function such that $e_\beta \in [\sigma_\beta] \subset U_B$. Without loss of generality we can assume that $\alpha \in dom(\sigma_\alpha)$, for every $\alpha < \mathfrak{c}$.

Use $\mathfrak{c} > \aleph_1$ to find an ordinal $\gamma < \mathfrak{c}$ such that $\gamma \notin \bigcup \{dom(\sigma_\alpha): \alpha \in A \}$. Then $\gamma \in B$ and hence

$$[\sigma_\gamma] \cap [\sigma_\alpha]=\emptyset \textrm{ for every }\alpha \in A.$$
%


Fix $\alpha \in A$ and note that $\gamma \notin dom(\sigma_\alpha)$. Therefore $\sigma_\alpha(\beta)=0$ for every $\beta \in dom(\sigma_\gamma) \cap dom(\sigma_\alpha) \setminus \{\alpha \}$.  It follows that in order for $[\sigma_\gamma]$ to be disjoint from $[\sigma_\alpha]$ we must have $\alpha \in dom(\sigma_\gamma)$.

Since $\alpha$ was an arbitrary element of $A$ we deduce that $A \subset dom(\sigma_\gamma)$, but that contradicts the fact that $\sigma_\gamma$ is a countable partial function.

Therefore $X$ is not normal if CH doesn't hold.

\end{proof}

\section*{Acknowledgements}
Research of the first-named author was supported by a grant of the GNSAGA group of INdAM and the FORDECYT-PRONACES grant 64356/2020 of CONAHCyT. The second-named author was supported by a grant from the GNSAGA group of INdAM and by a grant from the Fondo Finalizzato alla Ricerca di Ateneo (FFR 2024) of the University of Palermo.

	\end{document}